\newtheorem{theorem}{Theorem}[section]
\newtheorem{proposition}[theorem]{Proposition}
\newtheorem{definition}[theorem]{Definition}
\newtheorem{cons}[theorem]{Consequence}
 \DeclareMathOperator{\id}{id}
\newenvironment{Proof}[1][Proof.]{\begin{trivlist}
\item[\hskip \labelsep {\bfseries #1}]}{\flushright
$\Box$\end{trivlist}}
\begin{document}

	\title{Rota-type operators on null-filiform associative algebras}

	\author{I.A. Karimjanov}
\address{[I.A. Karimjanov] Department of Matem\'aticas, Institute of Matem\'aticas, University of Santiago de Compostela, 15782, Spain.}
\email{iqboli@gmail.com}

	\author{I.   Kaygorodov}
\address{[I.   Kaygorodov] CMCC, Universidade Federal do ABC. Santo Andr\'e, Brasil.}
\email{kaygorodov.ivan@gmail.com}

	\author{M. Ladra}
\address{[M. Ladra] Department of Matem\'aticas, Institute of Matem\'aticas, University of Santiago de Compostela, 15782, Spain.}
\email{manuel.ladra@usc.es}

\

\

\thanks{This work was partially supported by Agencia Estatal de Investigaci\'on (Spain), grant MTM2016-79661-P (European FEDER
support included, UE); RFBR 17-01-00258; FAPESP 17/15437-6.  }

\begin{abstract}
We give the description of homogeneous Rota-Baxter operators, Reynolds operators, Nijenhuis operators, Average operators and differential operator of weight 1 of null-filiform associative algebras of arbitrary dimension.

\end{abstract}
\subjclass[2010]{	16W20, 16S50.}
\keywords{Rota-Baxter operator; Reynolds operator; Nijenhuis operator, Average operator; differential operator;
null-filiform associative algebras.}
\maketitle

\section{Introduction}

Motivated by the important roles played by various linear operators in the study of mathematics through their actions on objects, Rota \cite{Rota} posed the problem of
finding all possible algebraic identities that can be satisfied by a linear operator on an algebra,
henceforth called Rota's Classification Problem.
Operator identities that were interested to Rota included
endomorphism operator, differential operator, average operator, inverse average operator, Rota-Baxter operator of weight $\lambda$ and  Reynolds operator.
After Rota posed his problem, more operators have appeared, such as
differential operator of weight $\lambda$, Nijenhuis operator, Leroux's TD operator
(for more information, see \cite{GG}).
The pivotal roles played by the endomorphisms (such as in Galois theory) and derivations (such as in calculus) are well-known. Their abstractions have led to the concepts of difference algebra and differential algebra respectively. The other operators also found applications in a broad range of pure and applied mathematics, including combinatorics, probability
and mathematical physics \cite{Rota,Guo}.

Here we give the definition of  operators which we considering in the present paper.

\begin{enumerate}

    \item[$\bullet$]
Homomorphism:
$\phi(x)\phi(y)=\phi(xy)$
    \item[$\bullet$]
Rota-Baxter operator of weight $\lambda$:
$P(x)P(y)=P(xP(y)+P(x)y+\lambda xy)$

    \item[$\bullet$]
Reynolds operator: $P(x)P(y)=P(xP(y)+P(x)y-P(x)P(y))$

    \item[$\bullet$]
Nijenhuis operator: $P(x)P(y)=P(xP(y)+P(x)y-P(xy))$

    \item[$\bullet$]
Average operator: $P(x)P(y)=P(xP(y))$

    \item[$\bullet$]
Differential operator of weight $\lambda$:
$d(xy)=d(x)y+xd(y)+\lambda d(x)d(y)$

\noindent In the particular case of that the weight is $0$ we have a derivation.

\end{enumerate}

The description of Rota-Baxter operators and other Rota-type operators is a very difficult problem.
At the moment there are descriptions of
all Rota-Baxter operators on $3$-dimensional simple  Lie algebra \cite{PBG,pkit},
on $4$-dimensional simple associative algebra \cite{TZS} and some other algebras \cite{maz,bai,bgp}.
The study of some particular cases of Rota-Baxter operators was initiated in \cite{Yu,ZGR} and \cite{GLB}.
In these papers, monomial and homogeneous operators were researched.
After that, the study of homogeneous operators of Rota-Baxter was continued in $3$-Lie algebras \cite{BZ1,BZ2}.

The organization of this paper is as follows. After this introduction, in Section~\ref{S:prel} we give some definitions
and necessary well-known results on null-filiform associative  algebras for the development of the paper.
In Section~\ref{S:main} we establish the principal results on the previous Rota type operators on null-filiform associative algebras.

Throughout this paper we will work with algebras over the field of complex numbers $\mathbb{C}$.

\section{Preliminaries}\label{S:prel}

\textbf{Gradings}. In the literature, group gradings have been intensively studied in the last years, motivated in part by their application in
physics, geometry and topology where they appear as the natural framework for an algebraic model \cite{cc09,e10,p89,bgr}.
In particular, in
the field of mathematical physics, they play an important role in the theory of strings, color supergravity, Walsh functions or
electroweak interactions \cite{12,17}. Certain advantages of endowing with a grading to an algebra can also be found in \cite{19,20}.
However, gradings by means of weaker structures than a group have been considered in the literature just in a slightly way.
In the present paper we wish to study some linear operators related with some gradings on algebras.

Let us give the definition of homogeneous operators related with a grading on an algebra.

\begin{definition}
Let $\mathbb G$ be an abelian group,
 $\mathcal A$ a $\mathbb G$-graded algebra
and $k$  an element of $\mathbb G$.
A homogeneous operator $\Psi$ with degree $k$ on the algebra $\mathcal A$
is a linear operator on $\mathcal A$ satisfying
\[      \Psi_k(\mathcal{A}_m) \subseteq\mathcal{A}_{m+k}.
\]
\end{definition}

\textbf{Null-filiform algebras}.
The study of null-filiform and filiform algebras has a very big history.
There are many results dedicated by
null-filiform and filiform associative algebras
\cite{karel,20,bgr}

For an algebra $\mathcal{A}$ of an arbitrary variety, we consider the series
\[
\mathcal{A}^1=\mathcal{A}, \qquad \ \mathcal{A}^{i+1}=\sum\limits_{k=1}^{i}\mathcal{A}^k \mathcal{A}^{i+1-k}, \qquad i\geq 1.
\]

\begin{definition}
An $n$-dimensional algebra $\mathcal{A}$ is called null-filiform if $\dim \mathcal{A}^i=(n+ 1)-i,\ 1\leq i\leq n+1$.
\end{definition}

We say that  an  algebra $\mathcal{A}$ is \emph{nilpotent} if $\mathcal{A}^{i}=0$ for some $i \in \mathbb{N}$. The smallest integer satisfying $\mathcal{A}^{i}=0$ is called the  \emph{index of nilpotency} of $\mathcal{A}$.

It is easy to see that an algebra has a maximum nilpotency index if and only if it is null-filiform. For a nilpotent algebra, the condition of null-filiformity is equivalent to the condition that the algebra is one-generated.
%

All null-filiform associative algebras were described in  \cite[Proposition 5.3]{karel}:

\begin{theorem} An arbitrary $n$-dimensional null-filiform associative algebra is isomorphic to the algebra:
\[\mathcal{A} : \quad e_i e_j= e_{i+j}, \quad 2\leq i+j\leq n,\]
where $\{e_1, e_2, \dots, e_n\}$ is a basis of the algebra $\mathcal{A}$ and the omitted products vanish.
\end{theorem}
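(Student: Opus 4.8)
The plan is to pin down a single generator, take its powers as a basis, and read the multiplication table off directly; the dimension formula in the definition of null-filiform does almost all of the work.

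First I would record the consequences of the hypothesis $\dim\mathcal{A}^i = (n+1)-i$. Taking $i=n+1$ gives $\mathcal{A}^{n+1}=0$, so $\mathcal{A}$ is nilpotent of index exactly $n+1$, and the chain $\mathcal{A}=\mathcal{A}^1\supsetneq\mathcal{A}^2\supsetneq\cdots\supsetneq\mathcal{A}^{n+1}=0$ is strict with each quotient $\mathcal{A}^i/\mathcal{A}^{i+1}$ one-dimensional. Since $\mathcal{A}$ is associative, an easy induction shows $\mathcal{A}^i$ is simply the span of all products $x_1x_2\cdots x_i$ of $i$ elements of $\mathcal{A}$ (all bracketings agree), and that $\mathcal{A}^a\mathcal{A}^b\subseteq\mathcal{A}^{a+b}$. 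As $\mathcal{A}^2\subsetneq\mathcal{A}$, choose $e_1\in\mathcal{A}\setminus\mathcal{A}^2$ and set $e_i:=e_1^i$ for $2\le i\le n$, which is unambiguous by associativity.

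The core step is the claim that $\mathcal{A}^i=\mathbb{C}\,e_1^i+\mathcal{A}^{i+1}$ for $1\le i\le n$, proved by induction on $i$ via the sharper statement that every product of $i$ elements of $\mathcal{A}$ is congruent modulo $\mathcal{A}^{i+1}$ to a scalar multiple of $e_1^i$. Indeed, using $\mathcal{A}=\mathbb{C}\,e_1\oplus\mathcal{A}^2$ one writes each factor as $\alpha_k e_1+a_k$ with $a_k\in\mathcal{A}^2$ and expands: the all-$e_1$ term is $\bigl(\prod\alpha_k\bigr)e_1^i$, while any term containing some $a_k$ is a product whose factors have degrees summing to at least $(i-1)+2=i+1$, hence lies in $\mathcal{A}^{i+1}$; associativity is what allows this regrouping. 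Comparing with $\dim\mathcal{A}^i/\mathcal{A}^{i+1}=1$ then forces $e_1^i\notin\mathcal{A}^{i+1}$, in particular $e_1^i\neq 0$, for every $i\le n$, whereas $e_1^{n+1}\in\mathcal{A}^{n+1}=0$.

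It remains to assemble the conclusion. Running the identity $\mathcal{A}^i=\mathbb{C}\,e_1^i+\mathcal{A}^{i+1}$ downward from $\mathcal{A}^n=\mathbb{C}\,e_1^n$ shows $\{e_1^i,\dots,e_1^n\}$ spans $\mathcal{A}^i$; for $i=1$ this is a spanning set of $n$ vectors in the $n$-dimensional space $\mathcal{A}$, hence a basis $\{e_1,\dots,e_n\}$. Finally $e_ie_j=e_1^ie_1^j=e_1^{i+j}$, which equals $e_{i+j}$ if $i+j\le n$ and $0$ if $i+j\ge n+1$, so sending $e_i$ to the $i$-th basis vector of the model algebra in the statement is an algebra isomorphism. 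The only genuinely nontrivial point is the inductive identity $\mathcal{A}^i=\mathbb{C}\,e_1^i+\mathcal{A}^{i+1}$ together with its dimension-count upgrade to linear independence; I expect the main care to go into phrasing that induction cleanly (in terms of products of $i$ factors rather than of the iterated ideals directly).
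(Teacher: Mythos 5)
Your argument is correct and complete. Note that the paper itself offers no proof of this statement: it is imported verbatim from the cited reference (Dekimpe--Ongenae, Proposition 5.3), so there is nothing internal to compare against. Your route --- choose $e_1\in\mathcal{A}\setminus\mathcal{A}^2$, show by expanding each factor along $\mathcal{A}=\mathbb{C}e_1\oplus\mathcal{A}^2$ that $\mathcal{A}^i=\mathbb{C}e_1^i+\mathcal{A}^{i+1}$, and let the one-dimensionality of the quotients force $e_1^i\notin\mathcal{A}^{i+1}$ --- is the standard proof that a nilpotent algebra with maximal nilpotency index is generated by the powers of a single element, and all the steps (in particular $\mathcal{A}^a\mathcal{A}^b\subseteq\mathcal{A}^{a+b}$ and the independence-from-bracketing of $\mathcal{A}^i$ in the associative setting) are justified where they need to be.
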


%

There is a natural grading on the null-filiform associative algebra $\mathcal{A}$,
\[\mathcal{A}=\bigoplus\limits_i\mathcal{A}_i,\]
where $\mathcal{A}_i=\mathcal{A}^i/\mathcal{A}^{i+1}, \ 1 \leq i\leq n-1$.

Now we are ready to give the definition of homogeneous operator of
natural grading on the null-filiform algebras.

\begin{definition}
A homogeneous operator $P_k$ with degree $k$ on the null-filiform associative algebra  is a operator on $\mathcal{A}$ of the following form
\begin{equation}\label{a}
P_k(e_i)=
\begin{cases}
       \psi(i)e_{i+k},  & 1\leq i\leq n-k, \\
       \psi(i)e_{i+k-n},  & n-k+1\leq i\leq n,
    \end{cases}
\end{equation}
where $\psi$ is a $\mathbb{C}$-valued function defined on $\mathbb{N}$.
\end{definition}

\section{Main results}\label{S:main}

\subsection{Derivations}

\begin{theorem}
Let $D$ be a derivation  on the null-filiform associative algebra.
Then
\[ D(e_k)= k \sum\limits_{i=1}^{n-k+1} \alpha_{i}  e_{k-1+i}.\]
\end{theorem}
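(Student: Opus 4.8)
The plan is to exploit that the null-filiform associative algebra $\mathcal{A}$ is one-generated: since $e_k = e_1 e_{k-1}$ for $2 \le k \le n$, every derivation $D$ is completely determined by the single value $D(e_1)$. So I would first write $D(e_1) = \sum_{i=1}^{n} \alpha_i e_i$ with $\alpha_i \in \mathbb{C}$; this fixes the constants $\alpha_i$ appearing in the statement, and it already gives the claimed formula for $k=1$, where the summation range $1 \le i \le n-k+1$ is exactly $1 \le i \le n$.

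Next I would prove $D(e_k) = k \sum_{i=1}^{n-k+1}\alpha_i e_{k-1+i}$ by induction on $k$, from $k=1$ up to $k=n$. For the inductive step, assuming the formula for some $k$ with $1 \le k \le n-1$, I apply the Leibniz rule to $e_{k+1} = e_k e_1$:
\[ D(e_{k+1}) = D(e_k)\,e_1 + e_k\, D(e_1) = k\sum_{i=1}^{n-k+1}\alpha_i\, e_{k-1+i}\, e_1 \;+\; e_k \sum_{i=1}^{n}\alpha_i\, e_i. \]
Here the key bookkeeping is that in $\mathcal{A}$ a product $e_p e_q$ equals $e_{p+q}$ when $p+q\le n$ and vanishes otherwise. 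Thus $e_{k-1+i}e_1 = e_{k+i}$ survives only for $i \le n-k$ (the $i = n-k+1$ term dies), and likewise $e_k e_i = e_{k+i}$ survives only for $i \le n-k$; both partial sums therefore collapse to $\sum_{i=1}^{n-k}\alpha_i e_{k+i}$. Hence $D(e_{k+1}) = (k+1)\sum_{i=1}^{n-k}\alpha_i e_{k+i}$, which, after the trivial reindexing $e_{k+i} = e_{(k+1)-1+i}$ and the identity $n-k = n-(k+1)+1$, is exactly the claimed formula with $k$ replaced by $k+1$.

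The argument is essentially a careful index computation, so the only real subtlety — the ``hard part'', such as it is — is making sure the truncation of the multiplication table is handled consistently: one must check that the terms dropped from $D(e_k)e_1$ and from $e_k D(e_1)$ are precisely the ones forced to vanish by $e_p e_q = 0$ for $p+q>n$, and that no constraint on the $\alpha_i$ is thereby introduced (indeed, conversely any choice of $\alpha_1,\dots,\alpha_n$ yields a derivation via this formula, so the derivation algebra is $n$-dimensional, though this is not needed here). It is also worth recording that the induction is legitimate only for $1\le k\le n-1$, since $e_{k+1}=e_k e_1$ requires $k+1\le n$, and that this range already produces $D(e_k)$ for every $1\le k\le n$.
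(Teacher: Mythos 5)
Your proof is correct and follows essentially the same route as the paper: both exploit that the algebra is generated by $e_1$ and apply the Leibniz rule, the paper expanding $D(e_1^k)$ directly into $k\,e_1^{k-1}D(e_1)$ by commutativity while you unroll the same computation as an induction on $k$ via $e_{k+1}=e_ke_1$. Your index bookkeeping for the truncation $e_pe_q=0$ when $p+q>n$ checks out, and your parenthetical remark that any choice of $\alpha_1,\dots,\alpha_n$ conversely yields a derivation matches the converse the paper also records.
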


\begin{Proof}
Let $D(e_1)=\sum \alpha_i e_i$.
It is easy to see that
\begin{equation}\label{nomer0}
D(e_k)=D(e_1^k)= \sum e_1 \cdots D(e_1) \cdots e_1= k e_{k-1}D(e_1)= k \sum\limits_{i=1}^{n-k+1} \alpha_{i}  e_{k-1+i}.\end{equation}
From associativity and commutativity of our algebra, it is easy to see that  every linear mapping defined by \eqref{nomer0} is a derivation.
The theorem is proved.
\end{Proof}

\subsection{Homomorphisms}

\begin{theorem}
Let $\phi$ be a homomorphism  on the null-filiform associative algebra.
Then \[ \phi(e_k)= \sum\limits_{l=k}^{n} \left( \sum\limits_{i_1+ \dots +i_l=k} \alpha_{i_1} \cdots \alpha_{i_l} e_{l}\right).\]

\end{theorem}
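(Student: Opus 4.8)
The plan is to exploit that the null-filiform algebra $\mathcal A$ is one-generated. From the multiplication table $e_ie_j=e_{i+j}$ (for $2\le i+j\le n$) one sees by induction that $e_1^k=e_k$ for $1\le k\le n$, while any product of more than $n$ of the generators vanishes. Hence a homomorphism $\phi$ is entirely determined by the single element $\phi(e_1)\in\mathcal A$; write $\phi(e_1)=\sum_{i=1}^{n}\alpha_i e_i$. Multiplicativity and associativity then give, for every $1\le k\le n$,
\[
\phi(e_k)=\phi\bigl(e_1^k\bigr)=\phi(e_1)^k ,
\]
so the whole statement reduces to computing the $k$-th power of $\sum_{i=1}^{n}\alpha_i e_i$ inside $\mathcal A$.

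First I would expand this power distributively, obtaining $\phi(e_1)^k=\sum_{i_1,\dots,i_k=1}^{n}\alpha_{i_1}\cdots\alpha_{i_k}\,e_{i_1}e_{i_2}\cdots e_{i_k}$, and then evaluate each monomial by the multiplication rule: $e_{i_1}e_{i_2}\cdots e_{i_k}=e_{i_1+\dots+i_k}$ when $i_1+\dots+i_k\le n$, and it equals $0$ otherwise. Note that, in contrast with the homogeneous operators of \eqref{a}, there is no cyclic wrap-around here, because $\phi$ carries no prescribed degree. Since each $i_j\ge 1$, the surviving monomials have total degree $l:=i_1+\dots+i_k$ with $k\le l\le n$. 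Regrouping the monomials according to this total degree, so that the coefficient of $e_l$ becomes $\sum_{i_1+\dots+i_k=l}\alpha_{i_1}\cdots\alpha_{i_k}$, yields exactly the claimed formula.

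Finally I would record the converse: every such $\phi$ is indeed a homomorphism. If $i+j\le n$, then $\phi(e_i)\phi(e_j)=\phi(e_1)^i\phi(e_1)^j=\phi(e_1)^{i+j}=\phi(e_{i+j})=\phi(e_ie_j)$; if $i+j>n$, then every monomial in the expansion of $\phi(e_1)^{i+j}$ already has degree greater than $n$, so $\phi(e_i)\phi(e_j)=0=\phi(e_ie_j)$. Thus $\phi(e_1)$ may be prescribed arbitrarily. The proof is essentially a bookkeeping exercise; the only mildly delicate point is the reindexing of the $k$-fold sum by total degree together with the truncation at $n$ — in particular, checking that the outer index genuinely starts at $l=k$ and that no wrap-around term of the kind appearing in \eqref{a} contributes.
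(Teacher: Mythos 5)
Your proposal is correct and takes essentially the same route as the paper: set $\phi(e_1)=\sum\alpha_ie_i$, use one-generatedness to get $\phi(e_k)=\phi(e_1)^k$, expand and regroup by total degree (with truncation at $n$), and observe conversely that associativity and commutativity make any such assignment a homomorphism. Your regrouping in fact yields the coefficient of $e_l$ as $\sum_{i_1+\cdots+i_k=l}\alpha_{i_1}\cdots\alpha_{i_k}$, which corrects an evident transposition of the indices $k$ and $l$ in the formula as printed.
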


\begin{Proof}
Let $\phi(e_1)=\sum \alpha_i e_i$.
It is easy to see that
\begin{equation}\label{nomer1}
\phi(e_k)=\left(\phi(e_1)\right)^k=\left(\sum \alpha_i e_i\right)^k=\sum\limits_{l=k}^{n} \left( \sum\limits_{i_1+ \dots +i_l=k} \alpha_{i_1} \cdots \alpha_{i_l} e_{l}\right).\end{equation}
From associativity and commutativity of our algebra, it is easy to see that  every linear mapping defined by \eqref{nomer1} is a homomorphism.
The theorem is proved.

\end{Proof}

\subsection{Differential operator of weight $\mathbf{\lambda \neq 0} $} If $\lambda\neq0$ then without loss of generality we can assume that $\lambda=1$.
Using ideas from \cite[Lemma 2.2]{lambda}, we note that  $d$ is a differential operator of weight $1$ of the null-filiform associative algebra
if and only if $d+ \id$ is a homomorphism.
Now, using the description of all homomorphisms of the null-filiform associative algebra, we have the description of all differential operators of weight $1$.

\begin{theorem}
Let $d$ be a differential operator of weight $1$ on the null-filiform associative algebra.
Then \[ d(e_k)= \sum\limits_{l=k}^{n} \left( \sum\limits_{i_1+ \dots +i_l=k} \alpha_{i_1} \cdots \alpha_{i_l} e_{l}\right)-e_k.\]

\end{theorem}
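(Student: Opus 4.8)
The plan is to reduce this statement immediately to the homomorphism theorem via the observation already recorded in the text: a linear operator $d$ on the null-filiform associative algebra $\mathcal{A}$ is a differential operator of weight $1$ if and only if $d+\id$ is a homomorphism. First I would verify this equivalence by a direct substitution: writing $\phi = d + \id$, one has $\phi(x)\phi(y) = (d(x)+x)(d(y)+y) = d(x)d(y) + d(x)y + xd(y) + xy$, while $\phi(xy) = d(xy) + xy$; hence $\phi(x)\phi(y) = \phi(xy)$ holds for all $x,y$ precisely when $d(xy) = d(x)y + xd(y) + d(x)d(y)$, which is the weight-$1$ differential identity. This is the computation from \cite[Lemma 2.2]{lambda} specialized to our setting, and it is entirely formal, requiring no structural facts about $\mathcal{A}$.

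Next I would apply the homomorphism theorem proved just above. Setting $\phi = d + \id$ and writing $\phi(e_1) = \sum_i \alpha_i e_i$, that theorem gives
\[
\phi(e_k) = \sum_{l=k}^{n}\left(\sum_{i_1+\dots+i_l=k}\alpha_{i_1}\cdots\alpha_{i_l}\, e_l\right).
\]
Since $d = \phi - \id$, we get $d(e_k) = \phi(e_k) - e_k$, which is exactly the claimed formula. Conversely, any operator of the stated form equals $\phi - \id$ for the homomorphism $\phi$ determined by those $\alpha_i$, so it is a differential operator of weight $1$; this gives the "if and only if" content.

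The only mild subtlety I would flag is bookkeeping: the $\alpha_i$ appearing in the formula are the coordinates of $\phi(e_1) = d(e_1) + e_1$, not of $d(e_1)$ itself, so one should state clearly what the parameters mean (equivalently, $\alpha_1 = \delta_1 + 1$ and $\alpha_i = \delta_i$ for $i \geq 2$ if $d(e_1) = \sum_i \delta_i e_i$). I do not expect any real obstacle here — the homomorphism description does all the work and the reduction is a one-line algebraic identity. The proof would therefore consist of: (1) the substitution establishing $d$ is a weight-$1$ differential operator $\iff$ $d+\id$ is a homomorphism; (2) invoking the homomorphism theorem; (3) subtracting $\id$.

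\begin{Proof}
As noted above, $d$ is a differential operator of weight $1$ if and only if $d+\id$ is a homomorphism: setting $\phi = d+\id$, for all $x,y \in \mathcal{A}$ we have
\[
\phi(x)\phi(y) - \phi(xy) = \bigl(d(x)+x\bigr)\bigl(d(y)+y\bigr) - d(xy) - xy = d(x)d(y)+d(x)y+xd(y) - d(xy),
\]
so $\phi$ is a homomorphism exactly when $d(xy)=d(x)y+xd(y)+d(x)d(y)$. Writing $\phi(e_1)=\sum_i \alpha_i e_i$, the description of homomorphisms of the null-filiform associative algebra gives
\[
\phi(e_k) = \sum_{l=k}^{n}\left(\sum_{i_1+\dots+i_l=k}\alpha_{i_1}\cdots\alpha_{i_l}\, e_l\right),
\]
and therefore
\[
d(e_k) = \phi(e_k) - e_k = \sum_{l=k}^{n}\left(\sum_{i_1+\dots+i_l=k}\alpha_{i_1}\cdots\alpha_{i_l}\, e_l\right) - e_k.
\]
Conversely, any linear map of this form equals $\phi - \id$ for the homomorphism $\phi$ with $\phi(e_1)=\sum_i \alpha_i e_i$, hence is a differential operator of weight $1$. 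The theorem is proved.
\end{Proof}
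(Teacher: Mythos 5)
Your proof is correct and follows exactly the route the paper takes: the observation that $d$ is a weight-$1$ differential operator if and only if $d+\id$ is a homomorphism, followed by an appeal to the homomorphism theorem and subtraction of the identity. The paper merely states this reduction without writing out the substitution; your version supplies that one-line verification and the (worthwhile) remark that the $\alpha_i$ are the coordinates of $d(e_1)+e_1$.
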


\subsection{Homogeneous Rota-Baxter operator of weight $\mathbf{0}$}

\begin{theorem}  Let $P_0$ be a Rota-Baxter operator of weight $0$ with degree $0$ on the null-filiform associative algebra. Then
\begin{enumerate}
\item[(a)] If  $\psi(t)\neq0$ for some $1\leq t\leq \lfloor n/2\rfloor$ then
\[P_0(e_i)=
\begin{cases}
\frac{t\psi(t)}{i}e_i, & i=0 \ (\bmod \ t), \\
0, & \text{otherwise};
\end{cases}
\]
\item[(b)] If $\psi(t)=0$ for all $1\leq t\leq \lfloor n/2\rfloor$  then
\[P_0(e_i)=
\begin{cases}
\psi(r)e_r, & i=r, \\
0, & \text{otherwise},
\end{cases}
\]
for some fixed $\lfloor n/2\rfloor < r \leq n$.
\end{enumerate}

\end{theorem}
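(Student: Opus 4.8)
The plan is to write out the Rota-Baxter identity for a homogeneous operator $P_0$ of degree $0$ and extract constraints on the function $\psi$. Since the degree is $0$ and $i+j \le n$ forces us to stay in the "non-wraparound" regime for the relevant products, I would set $P_0(e_i) = \psi(i) e_i$ for all $i$ (noting that for degree $0$ the two cases in \eqref{a} coincide) and evaluate the weight-$0$ identity $P_0(x)P_0(y) = P_0\big(xP_0(y) + P_0(x)y\big)$ on basis elements $x = e_i$, $y = e_j$ with $i + j \le n$. Using $e_ie_j = e_{i+j}$, the left-hand side is $\psi(i)\psi(j)e_{i+j}$ and the right-hand side is $\big(\psi(j) + \psi(i)\big)\psi(i+j)e_{i+j}$, so the whole content of the identity is the numerical recursion
\begin{equation}\label{rb0rec}
\psi(i)\psi(j) = \big(\psi(i) + \psi(j)\big)\,\psi(i+j), \qquad i + j \le n.
\end{equation}
Everything then reduces to solving \eqref{rb0rec}.

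First I would analyze \eqref{rb0rec} by taking $i = j$: $\psi(i)^2 = 2\psi(i)\psi(2i)$ whenever $2i \le n$, so either $\psi(i) = 0$ or $\psi(2i) = \tfrac12\psi(i)$; more generally, an easy induction on $m$ using \eqref{rb0rec} with $i$ and $mi$ gives $\psi\big((m+1)i\big) = \frac{\psi(i)\psi(mi)}{\psi(i)+\psi(mi)}$, which (when nothing vanishes) yields $\psi(mi) = \psi(i)/m$ by induction, i.e. $m\psi(mi) = \psi(i)$ is constant along multiples of $i$. This is the source of the factor $\frac{t\psi(t)}{i}$ in case (a). To prove (a), suppose $t$ is minimal in $\{1,\dots,\lfloor n/2\rfloor\}$ with $\psi(t) \ne 0$ — one checks via \eqref{rb0rec} that $\psi(i) = 0$ whenever $t \nmid i$ (if $t \nmid i$ but $\psi(i) \ne 0$, combine $i$ with suitable multiples of $t$, or use minimality of $t$ together with a gcd argument on the indices where $\psi$ is nonzero, to force a contradiction), and $\psi(i) = t\psi(t)/i$ when $t \mid i$ from the "constant along multiples" relation; one must separately check consistency for indices $i$ with $n/2 < i \le n$, where \eqref{rb0rec} imposes no direct constraint, but the values there are already pinned down by writing $e_i = e_t \cdot e_{i-t}$ and applying the identity with $j = i - t \le n - t \le n/2$ when possible, or observing that no constraint at all survives and the stated formula is the forced one by the recursion from smaller indices. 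Finally I would verify that the displayed $P_0$ indeed satisfies the Rota-Baxter identity, which is just plugging the closed form back into \eqref{rb0rec}.

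For case (b), the hypothesis is $\psi(t) = 0$ for all $t \le \lfloor n/2\rfloor$. Then \eqref{rb0rec} is automatically satisfied on all pairs with $i + j \le n$ and $\min(i,j) \le \lfloor n/2 \rfloor$ — which is every admissible pair, since $i + j \le n$ already forces $\min(i,j) \le n/2$. Hence the only surviving constraints are vacuous, so $\psi$ is unconstrained on $\{\lfloor n/2\rfloor + 1, \dots, n\}$. But one must also re-examine whether $P_0$ being a genuine operator (the identity on all pairs, not only admissible ones) forces more: products $e_ie_j$ with $i + j > n$ vanish in $\mathcal A$, so the Rota-Baxter identity on such pairs reads $\psi(i)\psi(j)e_{i+j} = 0 = P_0(\cdots)$, which holds trivially. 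Thus any single-index support is allowed, giving the stated family parametrized by $r$. I would note that a priori $\psi$ could be nonzero at several indices in the top half simultaneously; the claim is that this still defines a Rota-Baxter operator, so strictly the "for some fixed $r$" phrasing is describing the indecomposable pieces, and I would either prove the general support is allowed or restrict attention as the statement does.

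The main obstacle I anticipate is the bookkeeping in case (a) for the indices in the upper half $n/2 < i \le n$: there the recursion \eqref{rb0rec} gives no direct equation, so one has to argue that the value $\psi(i)$ is nonetheless determined — either forced to be $t\psi(t)/i$ by expressing $e_i$ through products of smaller basis elements and invoking the identity, or, if genuinely free, reconciling that with the clean closed form in the statement. Carefully delimiting which pairs $(i,j)$ actually produce constraints (those with $i+j \le n$, equivalently $i + k + j + k \le n + 2k$ in the general degree-$k$ setting, here $k=0$) and handling the gcd/minimality argument that kills all indices not divisible by $t$ is where the real work sits; the rest is the routine verification that the exhibited operators are Rota-Baxter.
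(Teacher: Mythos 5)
Your reduction to the functional equation $\psi(i)\psi(j)=(\psi(i)+\psi(j))\psi(i+j)$ for $2\le i+j\le n$, and your treatment of case (a) via $m\psi(mt)=\psi(t)$ along multiples of the minimal nonzero index $t$ plus the decomposition $s=it+q$ to kill non-multiples, is exactly the paper's argument. But there is a genuine error in your case (b). You claim that when $\psi(t)=0$ for all $t\le\lfloor n/2\rfloor$ the equation is ``automatically satisfied'' on every admissible pair and hence $\psi$ is unconstrained on the top half, and you go on to assert that $\psi$ nonzero at several indices in $\{\lfloor n/2\rfloor+1,\dots,n\}$ simultaneously still gives a Rota--Baxter operator. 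That is false: if $\psi(i)=0$ the equation does not become vacuous, it becomes $0=\psi(j)\psi(i+j)$, which is a real constraint whenever both $j$ and $i+j$ lie in the top half. Concretely, for $n=4$ with $\psi(1)=\psi(2)=0$ and $\psi(3)=a$, $\psi(4)=b$, the identity on the pair $(e_1,e_3)$ gives $0=P(e_1)P(e_3)=P(e_1P(e_3))=ab\,e_4$, so $ab=0$. This constraint is precisely what forces the support in case (b) to be a \emph{single} index $r$ — the ``for some fixed $r$'' in the statement is a uniqueness assertion, not a choice of indecomposable piece. The paper proves it by taking $t$ minimal with $\psi(t)\neq0$ (necessarily $t>\lfloor n/2\rfloor$) and applying the equation to $s=t+q$, $1\le q\le n-t$, to conclude $\psi(s)=0$ for all $s>t$; your proposal as written would classify extra, non-existent operators.

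A smaller instance of the same misreading appears in your case (a): you say the recursion ``imposes no direct constraint'' for $n/2<i\le n$, but the equation constrains $\psi(i+j)$ for every sum $i+j\le n$, so every index from $2$ to $n$ occurs as an output; your first proposed fix (apply the identity to the pair $(t,i-t)$) is the correct one, and the alternative ``no constraint survives'' branch should be discarded.
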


\begin{Proof}
Consider
\[P(e_i)P(e_j)=\psi(i)\psi(j)e_{i}e_{j}=\psi(i)\psi(j)e_{i+j}, \quad 2\leq i+j\leq n.\]
On the other hand
\[P(e_i)P(e_j)=P(e_iP(e_j)+P(e_i)e_j)=(\psi(i)+\psi(j))P(e_{i+j})=(\psi(i)+\psi(j))\psi(i+j)e_{i+j}.\]
Comparing the coefficients of the basic elements we derive
\begin{equation}\label{c}
    \psi(i)\psi(j)=(\psi(i)+\psi(j))\psi(i+j), \quad 2\leq i+j\leq n.
\end{equation}

\begin{enumerate}

\item[(a)] If $\psi(1)\neq0$ then we can prove the following equalities
by an induction on $i$:
\begin{equation}\label{d}
     \psi(i)=\frac{\psi(1)}{i}.
\end{equation}

Obviously,  Eq.~\eqref{d} holds for $i = 2$. Let us assume that the equality holds for $2 < i < n$, and
we will prove it for $i + 1$: using Eq.~\eqref{c} when $j=1$ we have
\[\psi(i+1)=\frac{\psi(1)\psi(i)}{\psi(1)+\psi(i)}=\frac{\frac{\psi(1)^2}{i}}{\frac{\psi(1)(i+1)}{i}}
=\frac{\psi(1)}{i+1}.\]
so the induction proves the Eq.~\eqref{d} for any
$i, \ 2\leq i\leq n$.

Let us suppose that $\psi(1)=0$ and
$\psi(t)\neq0$ for some $2\leq t\leq \lfloor n/2\rfloor$. With a similar
induction as the given for Eq.~\eqref{d}, it is easy to check that
the following equalities hold:
\[\psi(it)=\frac{\psi(t)}{i}, \qquad 2\leq i\leq \lfloor n/t\rfloor .\]

If $\{s/t\}\neq0$ we always can suppose $s=it+q$, where
$t+1\leq s\leq n,\ 1\leq q\leq t-1$. It follows from Eq.~\eqref{c} we
have
\[\psi(s)=\psi(it+q)=\frac{\psi(q)\psi(it)}{\psi(q)+\psi(it)}=0.\]

So we obtain \[\psi(s)=0, \quad t+1\leq s\leq n, \quad  \{s/t\}\neq0.\]

\item[(b)] Let $\psi(r)=0, \ 1\leq r\leq t-1$ and
$\psi(t)\neq0, \ \lfloor n/2\rfloor<t\leq n$. As previous case we can
always suppose $s=t+q$ where $1\leq q\leq n-t$ then
\[\psi(s)=\psi(t+q)=\frac{\psi(t)\psi(q)}{\psi(t)+\psi(q)}=0.\]
 Hence
we have $\psi(s)=0$ where $t+1\leq s\leq n$.
\end{enumerate}
\end{Proof}

\begin{proposition}  Let $P_k$ be a homogeneous Rota-Baxter operator of weight $0$ with degree $k \ (k\neq0)$ on the  null-filiform associative algebra.
Then
\[P_k(e_i)=
\begin{cases}
\psi(i)e_{i+k}, & 1\leq i \leq n-k, \\
0, & n-k+1\leq i \leq n,
\end{cases}
\]
and the function $\psi$ satisfies that:
\begin{equation}\label{b}
\psi(i)\psi(j)=(\psi(i)+\psi(j))\psi(i+j+k), \quad  2\leq i+j\leq n-2k.
\end{equation}
 \end{proposition}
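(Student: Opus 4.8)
The plan is to take the definition \eqref{a} as the starting point --- it already fixes the shape of $P_k$ --- and to extract the two stated assertions by substituting well-chosen pairs of basis vectors into the Rota-Baxter identity $P(x)P(y)=P(xP(y)+P(x)y)$. Write $P_k(e_i)=\psi(i)e_{i+k}$ for $1\le i\le n-k$ and $P_k(e_i)=\psi(i)e_{i+k-n}$ for $n-k+1\le i\le n$.

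The first step, which I expect to be the main obstacle, is to prove the vanishing $\psi(i)=0$ for $n-k+1\le i\le n$ (equivalently $P_k(e_i)=0$ there). The key is to test the identity on $x=e_i$ and $y=e_{n-k}$. On the left, $P_k(e_i)P_k(e_{n-k})=\psi(i)\psi(n-k)\,e_{i+k-n}e_n$, and this vanishes because $(i+k-n)+n=i+k>n$. On the right, $e_iP_k(e_{n-k})=\psi(n-k)\,e_ie_n=0$ since $i+n>n$, whereas $P_k(e_i)e_{n-k}=\psi(i)\,e_{i+k-n}e_{n-k}=\psi(i)\,e_i$, using $(i+k-n)+(n-k)=i$ and $i\ge n-k+1\ge 2$; applying $P_k$ once more, with $i$ still in the wrap-around range, gives $\psi(i)^2e_{i+k-n}$. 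Equating the two sides forces $\psi(i)^2=0$, hence $\psi(i)=0$. Everything here rests on recognizing that $(e_i,e_{n-k})$ is the pair that makes the left-hand side collapse while still leaving a nonzero multiple of $\psi(i)^2$ on the right.

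Next I would derive \eqref{b}. For $i,j\ge 1$ with $i+j\le n-2k$ we have $i,j\le n-k$, so no wrap-around occurs: $P_k(e_i)=\psi(i)e_{i+k}$ and $P_k(e_j)=\psi(j)e_{j+k}$. The left side of the identity equals $\psi(i)\psi(j)\,e_{i+k}e_{j+k}=\psi(i)\psi(j)\,e_{i+j+2k}$, since $2\le i+j+2k\le n$. For the right side, $e_iP_k(e_j)+P_k(e_i)e_j=(\psi(i)+\psi(j))\,e_{i+j+k}$, and since $i+j+k\le n-k$, applying $P_k$ gives $(\psi(i)+\psi(j))\psi(i+j+k)\,e_{i+j+2k}$. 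Comparing the coefficient of $e_{i+j+2k}$ yields precisely \eqref{b}.

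Finally, to confirm that no further conditions are hidden, I would check that in the remaining index ranges the Rota-Baxter identity holds automatically: if $i>n-k$ or $j>n-k$ then one of $P_k(e_i),P_k(e_j)$ vanishes, so the left side is $0$, and the only product that could survive in $xP(y)+P(x)y$ has degree at least $n+2$, so the right side is $0$ as well; and if $i,j\le n-k$ but $i+j>n-2k$, then $i+j+2k>n$ kills the left side, while the inner index $i+j+k$ exceeds $n-k$, so the right side vanishes too (either the intermediate products fall out of range, or $P_k$ is applied to a basis vector on which $\psi=0$). Hence \eqref{a} together with the vanishing and \eqref{b} describe exactly the homogeneous Rota-Baxter operators of weight $0$ and degree $k\neq 0$; the arguments above only use $1\le k\le n-1$, so the degenerate case $k=n-1$ (where the range in \eqref{b} is empty) and small $n$ need no separate treatment.
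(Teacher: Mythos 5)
Your proof is correct and follows essentially the same route as the paper: the paper kills the wrap-around values by testing the identity on the pair $(e_{n-k},e_{n-k+j})$, which up to commutativity is exactly your pair $(e_i,e_{n-k})$ with $i=n-k+j$, and then reads off \eqref{b} from the non-wrapping range in the same way. Your closing check that the remaining index ranges impose no further conditions is a harmless (and in fact welcome) addition beyond what the paper records.
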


\begin{Proof}
Let us consider
\[P(e_{n-k})P(e_{n-k+j})=\psi(n-k)\psi(n-k+j)e_ne_j=0, \quad 1\leq j\leq k.\]

By the other side
\begin{align*}
 P(e_{n-k})P(e_{n-k+j})&=P(e_{n-k}P(e_{n-k+j})+P(e_{n-k})e_{n-k+j})\\
&  = P(\psi(n-k+j)e_{n-k}e_j+\psi(n-k)e_ne_{n-k+j})\\
&=\psi(n-k+j)P(e_{n-k+j})=\psi^2(n-k+j)e_j.
\end{align*}
Hence $\psi(i)=0$ where $n-k+1\leq i\leq n$.

Analogously from
\[P(e_i)P(e_j)=\psi(i)\psi(j)e_{i+k}e_{j+k}=\psi(i)\psi(j)e_{i+j+2k}, \quad 2\leq i+j\leq n-2k,\]
and
\[P(e_i)P(e_j)=P(e_iP(e_j)+P(e_i)e_j)=(\psi(i)+\psi(j))P(e_{i+j+k})=(\psi(i)+\psi(j))\psi(i+j+k)e_{i+j+2k}\]
we obtain system of Eq.~\eqref{b}.
\end{Proof}

\begin{cons}  Let $P_k$ be a homogeneous Rota-Baxter operator of weight $0$ with degree $k \ (k\geq\lfloor n/2\rfloor)$ on the null-filiform associative algebra.
Then
\[P_k(e_i)=
\begin{cases}
\psi(i)e_{i+k}, & 1\leq i \leq n-k, \\
0, & n-k+1\leq i \leq n,
\end{cases}
\]
where $\psi(i)\in\mathbb{C}$.
 \end{cons}

%
%
%
%

Now we will consider an operator Rota-Baxter of weight $0$ with degree $1$.

Let us suppose that $\psi(1)\neq0$. Then by induction from Eq.~\eqref{b} where $j=1$ we have
$\psi(2i-1)=\frac{\psi(1)}{i}$.

Let $\psi(2)\neq0$. Then we obtain that
$\psi(2i)=\frac{2\psi(1)}{2i+1}$. Otherwise $\psi(2i)=0$ where $1\leq i\leq \lfloor n/2 \rfloor$.

Let us suppose that $\psi(1)=0$ and
$\psi(t)\neq0$ for some $2\leq t\leq \lfloor (n-2)/2\rfloor$. With a similar
induction as the given for previous cases, it is easy to check that
the following equalities hold:
\[\psi((t+1)s-1)=\frac{\psi(t)}{s}, \quad 1\leq s\leq \lfloor n/(t+1)\rfloor .\]

Moreover, \[\psi(m)=0, \quad t+1\leq m\leq n-1, \quad  \{(m+1)/(t+1)\}\neq0.\]

In case  $\psi(t)=0$ for all $1\leq t\leq \lfloor (n-2)/2\rfloor$, we have
that $\psi(r), \psi(r+1)$ are arbitrary elements for some fixed  $\lfloor (n-2)/2\rfloor < r \leq n-2$.

Summarizing we have the following theorem.

\begin{theorem}\label{d1}  Let $P_1$ be a Rota-Baxter operator of weight $0$ with degree $1$ on the null-filiform associative algebra. Then
\begin{enumerate}
\item[(a)] If $\psi(1)\psi(2)\neq0$  then $P_1(e_i)=\frac{2\psi(1)}{i+1}e_{i+1}$;
\item[(b)] If $\psi(1)\neq0$ and $\psi(2)=0$ then
\[P_1(e_i)=
\begin{cases}
\frac{2\psi(1)}{i+1}e_{i+1}, & i  \ \text{odd}, \\
0, & \text{otherwise};
\end{cases}
\]
\item[(c)] If $\psi(1)=0$  and $\psi(t)\neq0$ for some $2\leq t\leq \lfloor (n-2)/2\rfloor$ then
\[P_1(e_i)=
\begin{cases}
\frac{(t+1)\psi(t)}{i+1}e_{i+1}, & i+1=0 \ (\bmod \ t+1), \\
0, & \text{otherwise};
\end{cases}
\]
\item[(d)] If $\psi(t)=0$ for all $1\leq t\leq \lfloor (n-2)/2\rfloor$  then
\[P_1(e_i)=
\begin{cases}
\psi(r)e_{r+1}, & i=r, \\
\psi(r+1)e_{r+2}, & i=r+1, \\
0, & \text{otherwise},
\end{cases}
\]
for some fixed $\lfloor (n-2)/2\rfloor < r \leq n-2$.\end{enumerate}

\end{theorem}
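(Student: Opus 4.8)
The plan is to read Theorem~\ref{d1} off the structural result just obtained, together with the functional equation it produces. By the preceding Proposition specialised to $k=1$, every Rota-Baxter operator $P_1$ of weight $0$ and degree $1$ on the null-filiform associative algebra has the form $P_1(e_i)=\psi(i)e_{i+1}$ for $1\le i\le n-1$ and $P_1(e_n)=0$, where $\psi$ obeys \eqref{b} with $k=1$, that is $\psi(i)\psi(j)=(\psi(i)+\psi(j))\psi(i+j+1)$ for $2\le i+j\le n-2$; and conversely any solution $\psi$ of this system gives such an operator. So the theorem is precisely a classification of the solutions of this equation, and I would carry it out by a case analysis on the smallest index at which $\psi$ does not vanish, running in each case a one-variable induction obtained by fixing the second argument --- exactly as in the proof of the degree-$0$ theorem above.

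First, in cases (a) and (b) I would assume $\psi(1)\neq0$. Putting $j=1$ in \eqref{b} gives the recursion $\psi(i+2)=\psi(1)\psi(i)/(\psi(1)+\psi(i))$, valid for $1\le i\le n-3$, and an induction over the odd arguments (whose denominators never vanish) yields $\psi(2i-1)=\psi(1)/i$. I would then split on $\psi(2)$: if $\psi(2)\neq0$, comparing the value of $\psi(5)$ coming from the odd formula with the one coming from \eqref{b} at $i=j=2$ forces $\psi(2)=2\psi(1)/3$, the recursion then gives $\psi(2i)=2\psi(1)/(2i+1)$, and the two formulas merge into $\psi(i)=2\psi(1)/(i+1)$ --- case (a); if $\psi(2)=0$, the recursion propagates $0$ along all even arguments --- case (b). In cases (c) and (d) I would assume $\psi(1)=0$ and let $t\ge2$ be the least index with $\psi(t)\neq0$. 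If $t\le\lfloor(n-2)/2\rfloor$ (case (c)), then \eqref{b} at $i=j=t$ gives $\psi(2t+1)=\psi(t)/2$, induction with $i=t$ gives $\psi((t+1)s-1)=\psi(t)/s$ for $1\le s\le\lfloor n/(t+1)\rfloor$, and for any index $m$ with $t+1\nmid m+1$ a suitable instance of \eqref{b} --- one of whose two small arguments lies in $\{1,\dots,t-1\}$, where $\psi$ vanishes by minimality of $t$ --- collapses to $0=(\text{nonzero})\cdot\psi(m)$, forcing $\psi(m)=0$; rewriting these two facts with $i+1$ in place of $s$ gives the displayed formula in (c). Throughout (a)--(c) the only bookkeeping is to check that every instance of \eqref{b} invoked stays in the range $i+j\le n-2$, and the hypothesis $t\le\lfloor(n-2)/2\rfloor$ is exactly what guarantees this in (c).

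The remaining case (d) --- $\psi(t)=0$ for all $1\le t\le\lfloor(n-2)/2\rfloor$ --- is where the real work lies. Here I would prove that the indices at which $\psi$ is nonzero all lie in a single consecutive pair $\{r,r+1\}$. The key move: if $\psi(a)\neq0$ and $\psi(b)\neq0$ with $\lfloor(n-2)/2\rfloor<a<b\le n-1$ and $b\ge a+2$, apply \eqref{b} with $i=b-a-1$ and $j=a$, so that $i+j+1=b$ and $i+j=b-1\le n-2$; an elementary estimate shows $b-a-1\le\lfloor(n-2)/2\rfloor$, hence $\psi(b-a-1)=0$ by hypothesis, and \eqref{b} reduces to $0=\psi(a)\psi(b)$, a contradiction. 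Thus the support of $\psi$ is contained in $\{r,r+1\}$ for some $r$, which may be normalised to lie in the range $\lfloor(n-2)/2\rfloor<r\le n-2$; a final check --- that no admissible instance of \eqref{b} can have any of its arguments equal to $r$ or $r+1$, since $2r$ and $2r+1$ already exceed $n-2$ --- shows that $\psi(r)$ and $\psi(r+1)$ are arbitrary, which is (d). I expect the index bookkeeping of case (d) to be the main obstacle: confirming that the auxiliary index $b-a-1$ always lands inside the forced-zero range $\{1,\dots,\lfloor(n-2)/2\rfloor\}$, and that the surviving instances of \eqref{b} impose no relation between $\psi(r)$ and $\psi(r+1)$. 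The rest is the routine inductions of cases (a)--(c).
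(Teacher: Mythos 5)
Your proposal follows essentially the same route as the paper: specialise the preceding Proposition to $k=1$ to get $P_1(e_i)=\psi(i)e_{i+1}$ with the functional equation \eqref{b}, then classify solutions by induction on the smallest non-vanishing index, splitting on $\psi(1)$ and $\psi(2)$ exactly as the paper does. Your treatment is correct, and in fact supplies more detail than the paper at the two points it glosses over — the derivation of $\psi(2)=2\psi(1)/3$ from the two computations of $\psi(5)$, and the support argument in case (d) showing the nonzero indices form a consecutive pair — so no changes are needed.
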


\subsection{ Rota-Baxter operator of weight $ \mathbf{\lambda \neq 0}$}  If $\lambda\neq0$ then without loss of generality we can assume that $\lambda=1$.

\begin{theorem} Let $P_k$ be a Rota-Baxter operator of weight $1$ with degree $k$ on the null-filiform associative algebra.
Then
\begin{enumerate}
\item[(a)] If $k=0$ then $P_0(e_i)=\frac{\psi(1)^i}{(\psi(1)+1)^i-\psi(1)^i}e_i$;
\item[(b)] If $1\leq k < \lfloor n/2 \rfloor$ then $P_k(e_i)=0$;
\item[(c)] If $k\geq \lfloor n/2 \rfloor$ then
\[P_k(e_i)=
\begin{cases}
\psi(1) e_{1+k}, & i=1, \\
0, & \text{otherwise},
\end{cases}
\]
\end{enumerate}
where $1\leq i\leq n$ and $\psi(1)\in \mathbb{C}$.
\end{theorem}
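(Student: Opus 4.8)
The plan is to substitute the explicit form \eqref{a} of a homogeneous operator $P_k$ into the weight-$1$ Rota-Baxter identity $P(e_i)P(e_j)=P\big(e_iP(e_j)+P(e_i)e_j+e_ie_j\big)$ and to compare coefficients of the basis vectors. Writing $\sigma$ for the index map of \eqref{a} (so $P_k(e_i)=\psi(i)e_{\sigma(i)}$ with $\sigma(i)=i+k$ or $i+k-n$), the left side is $\psi(i)\psi(j)\,e_{\sigma(i)+\sigma(j)}$ (or $0$), while the right side expands as $\big(\psi(i)+\psi(j)\big)\psi(i+j+k)\,e_{\sigma(i+j+k)}+\psi(i+j)\,e_{\sigma(i+j)}$ whenever the products involved survive. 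The feature absent in the weight-$0$ analysis is the last summand $P(e_ie_j)=\psi(i+j)\,e_{\sigma(i+j)}$, which generically lives in a homogeneous component different from the other terms, so isolating its coefficient will repeatedly force relations $\psi(i+j)=0$.

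For part (a), $k=0$, we have $P_0(e_i)=\psi(i)e_i$, and matching the coefficients of $e_{i+j}$ gives $\psi(i)\psi(j)=\big(\psi(i)+\psi(j)+1\big)\psi(i+j)$ for $2\le i+j\le n$. Setting $j=1$ yields the recursion $\psi(i+1)=\psi(1)\psi(i)/\big(\psi(1)+\psi(i)+1\big)$; an induction on $i$ then gives the closed form $\psi(i)=\psi(1)^i/\big((\psi(1)+1)^i-\psi(1)^i\big)$, the inductive step reducing to the identity $(a+1)(b^i-a^i)+a^i=b^{i+1}-a^{i+1}$ with $a=\psi(1)$, $b=a+1$. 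The degenerate values of $\psi(1)$ are dealt with separately: $\psi(1)\in\{0,-1\}$ force $\psi$ to be the constant $0$ or $-1$ (consistent with the formula), and if $(\psi(1)+1)^i=\psi(1)^i$ for some $2\le i\le n$ one checks that the recursion breaks down, i.e.\ no such operator exists; conversely the same identity shows every $\psi$ of the displayed form solves the functional equation, so $\psi(1)$ is the sole free parameter.

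For parts (b) and (c) I would argue in three steps. Step~A: for $i,j\ge1$ with $i+j\le n-k$ all products above survive, the coefficient of $e_{i+j+k}$ is $0$ on the left and exactly $\psi(i+j)$ on the right (because $\sigma(i+j+k)\ne i+j+k$ as $1\le k\le n-1$), giving $\psi(m)=0$ for $2\le m\le n-k$. Step~B: evaluate the identity on the pair $(e_{n-k},e_{n-k+j})$, $1\le j\le k$; the left side vanishes since $e_ne_j=0$, the mixed terms collapse to $\psi(n-k+j)\,e_{n-k+j}$, and — depending on whether $e_{n-k}e_{n-k+j}$ survives, which is governed by how $k$ compares with $n/2$ — the right side is $\psi(n-k+j)^2e_j$ or $\psi(n-k+j)^2e_j+\psi(2n-2k+j)\,e_{n-k+j}$; since $e_j\ne e_{n-k+j}$ (as $k\ne n$) we get $\psi(n-k+j)=0$ either way, hence $\psi(m)=0$ for $n-k+1\le m\le n$. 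Step~C: now $\psi(m)=0$ for $2\le m\le n$, and evaluating the identity on $(e_1,e_1)$ gives $\psi(1)^2e_{2+2k}$ on the left versus $2\psi(1)\psi(2+k)\,e_{\sigma(2+k)}+\psi(2)\,e_{2+k}=0$ on the right; in case (b) one has $2+2k\le n$, so $e_{2+2k}\ne0$ forces $\psi(1)=0$ and thus $P_k=0$, while in case (c) one has $2+2k>n$, both sides vanish, $\psi(1)$ stays arbitrary, and a direct verification confirms that $P_k(e_1)=\psi(1)e_{1+k}$, $P_k(e_i)=0$ for $i\ge2$, is a weight-$1$ Rota-Baxter operator.

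The main obstacle I anticipate is Step~B: one must keep precise track of which products $e_ae_b$ occurring there lie in $\{e_1,\dots,e_n\}$ and which vanish, and of whether $\sigma$ sends an index into the wrapped block $[1,k]$ or the unwrapped block $[k+1,n]$ — and the thresholds ($2k-n$, $n-2k$, $n-k$) controlling this change sign exactly at $k=\lfloor n/2\rfloor$, which is the boundary separating (b) from (c). Once the vanishing of $\psi$ on $[n-k+1,n]$ is in place, everything else is a routine comparison of coefficients.
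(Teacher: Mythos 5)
Your proposal is correct and follows essentially the same route as the paper: part (a) via the functional equation $\psi(i)\psi(j)=(\psi(i)+\psi(j)+1)\psi(i+j)$, the $j=1$ recursion and induction to the closed form, and parts (b)--(c) via the pair $(e_{n-k},e_{n-k+j})$ to kill $\psi$ on $[n-k+1,n]$, the extra term $P(e_ie_j)$ to kill $\psi$ on $[2,n-k]$, and the $i=j=1$ instance to kill $\psi(1)$ exactly when $k<\lfloor n/2\rfloor$. Your added attention to the degenerate denominators in (a) (e.g.\ $(\psi(1)+1)^i=\psi(1)^i$) and the explicit closing verification in (c) are refinements the paper omits, but they do not change the argument.
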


\begin{Proof}

\begin{enumerate}

\item[(a)]  When $k=0$ the comparison of both linear combinations of the identity Rota-Baxter operator implies that:
\begin{equation}\label{e}
\psi(i)\psi(j)=(\psi(i)+\psi(j)+1)\psi(i+j), \quad 2\leq i+j\leq n.
\end{equation}

Let us denote that $a=\psi(1)$. We will show that
 \begin{equation}\label{f}
  \psi(s)=\frac{a^s}{(a+1)^s-a^s}, \quad 1\leq s\leq n.
  \end{equation}
Let us suppose that Eq.~\eqref{f} is true for $s$ and we consider for
$s+1$: using Eq.~\eqref{e} when $i=s$ and $j=1$
\begin{align*}
\psi(s+1)&=\frac{\psi(1)\psi(s)}{\psi(1)+\psi(s)+1}=\frac{a^{s+1}}{(a+1)^sa^s
}\div(a+\frac{a^s}{(a+1)^s-a^s}+1)\\
&= \frac{a^{s+1}}{(a+1)^s-a^s}\div\frac{(a+1)^sa-a^{s+1}+a^s+(a+1)^s-a^s}{(a+1)^s-a^s}\\
&= \frac{a^{s+1}}{(a+1)^{s+1}-a^{s+1}}.
\end{align*}
Finally we will check that Eq.~\eqref{f} satisfy to Eq.~\eqref{e}.

Considering
\[\psi(i)\psi(j)=\frac{a^{i+j}}{((a+1)^i-a^i)((a+1)^j-a^j)}.\]
From the other side
\begin{align*}
&(\psi(i)+\psi(j)+1)\psi(i+j)=(\frac{a^i}{(a+1)^i-a^i}+\frac{a^j}{(a+1)^j-a^j}+1)
\frac{a^{i+j}}{(a+1)^{i+j}-a^{i+j}}\\
&= \frac{(a^i((a+1)^j-a^j)+a^j((a+1)^i-a^i)+((a+1)^i-a^i)((a+1)^j-a^j))a^{i+j}}{((a+1)^i-a^i)((a+1)^j-a^j)((a+1)^{i+j}-a^{i+j})}\\
&=\frac{(a^i(a+1)^j+a^j(a+1)^i-2a^{i+j}+((a+1)^{i+j}-a^i(a+1)^j-a^j(a+1)^i)+a^{i+j})a^{i+j}}{((a+1)^i-a^i)((a+1)^j-a^j)((a+1)^{i+j}-a^{i+j})}\\
&=\frac{((a+1)^{i+j}-a^{i+j})a^{i+j}}{((a+1)^i-a^i)((a+1)^j-a^j)((a+1)^{i+j}-a^{i+j})}=\frac{a^{i+j}}{((a+1)^i-a^i)((a+1)^j-a^j)}.
\end{align*}

\item[](b)--(c) Let us consider
\[P(e_{n-k})P(e_{n-k+j})=\psi(n-k)\psi(n-k+j)e_ne_j=0, \quad 1\leq j\leq k.\]

By the other side
\begin{align*}
& P(e_{n-k})P(e_{n-k+j})=P(e_{n-k}P(e_{n-k+j})+P(e_{n-k})e_{n-k+j}+ e_{n-k}e_{n-k+j})\\
&=P\Big(\psi(n-k+j)e_{n-k}e_j+\psi(n-k)e_ne_{n-k+j}+
\begin{cases}
 e_{2n-2k+j}, & 2k-j\geq n, \\
0, & \text{otherwise},
\end{cases} \Big)
\\
&= P\Big(\psi(n-k+j)e_{n-k+j}+
\begin{cases}
 e_{2n-2k+j}, & 2k-j\geq n, \\
0, & \text{otherwise},
\end{cases}\Big)
\\
&=\psi^2(n-k+j)e_j+
\begin{cases}
\ \psi(2n-2k+j)e_{n-k+j}, & 2k-j\geq n, \\
0, & \text{otherwise}.
\end{cases}
\end{align*}
Hence $\psi(i)=0$ where $n-k+1\leq i\leq n$.

By the identity of the  operator Rota-Baxter  the function $\psi$ satisfies
\begin{align*}
\psi(i)\psi(j) &=(\psi(i)+\psi(j))\psi(i+j+k), && 2\leq i+j\leq n-2k,\\
\psi(i+j)& =0, && 2\leq i+j\leq n-k.
\end{align*}
Moreover, we can obtain $\psi(1)=0$ when $k<\lfloor n/2 \rfloor$.

\end{enumerate}
\end{Proof}
\subsection{Reynolds operator}

\begin{theorem}  Let $P_0$ be a homogeneous Reynolds operator with degree $0$ on the null-filiform associative algebra. Then

\begin{enumerate}
\item[(a)] If $\psi(t)\neq0$ for some $1\leq t\leq \lfloor n/2\rfloor$ then
\[P_0(e_i)=
\begin{cases}
\frac{t\psi(t)}{i-(i-t)\psi(t)}e_i, & i=0 \ (\bmod \ t), \\
0, & \text{otherwise};
\end{cases}
\]
\item[(b)] If $\psi(t)=0$ for all $1\leq t\leq \lfloor n/2\rfloor$  then
\[P_0(e_i)=
\begin{cases}
\psi(r)e_r, & i=r, \\
0, & \text{otherwise},
\end{cases}
\]
for some fixed $\lfloor n/2\rfloor < r \leq n$.
\end{enumerate}

\end{theorem}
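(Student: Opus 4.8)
The plan is to mirror the analysis of the degree-$0$ Rota-Baxter operator of weight $0$. Since $P_0$ has degree $0$, formula~\eqref{a} forces $P_0(e_i)=\psi(i)e_i$ for $1\le i\le n$. Evaluating the Reynolds identity on $e_i,e_j$ and using $e_ie_j=e_{i+j}$ for $2\le i+j\le n$ (and $e_ie_j=0$ otherwise), the left-hand side equals $\psi(i)\psi(j)e_{i+j}$ while the right-hand side equals $\bigl(\psi(i)+\psi(j)-\psi(i)\psi(j)\bigr)\psi(i+j)e_{i+j}$; comparing coefficients of the basis vectors yields
\[
\psi(i)\psi(j)=\bigl(\psi(i)+\psi(j)-\psi(i)\psi(j)\bigr)\psi(i+j),\qquad 2\le i+j\le n .
\]
Everything then reduces to solving this recursion, just as Eq.~\eqref{c} was solved for the weight-$0$ Rota-Baxter operator; the only difference is the extra $-\psi(i)\psi(j)$ inside the second factor, which changes the closed form but not the shape of the argument.

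For part~(a) I would take $t$ to be the \emph{least} index with $\psi(t)\neq0$; the hypothesis guarantees $1\le t\le\lfloor n/2\rfloor$, and by minimality $\psi(1)=\dots=\psi(t-1)=0$. Put $b=\psi(t)$. Applying the functional equation to the pair $(it,t)$ (the base case $(t,t)$ being legitimate because $2t\le n$), an induction on $i$ gives
\[
\psi(it)=\frac{t\,b}{\,it-(it-t)b\,}=\frac{b}{\,i-(i-1)b\,},\qquad 1\le i\le\lfloor n/t\rfloor ,
\]
and along the way one checks $\psi(it)\neq0$: if either $\psi(it)+\psi(t)-\psi(it)\psi(t)$ or the displayed denominator vanished, the equation would force $b\,\psi(it)=0$, which is impossible. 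For an index $s>t$ with $t\nmid s$, write $s=it+q$ with $1\le q\le t-1$; then $\psi(q)=0$, and the functional equation for $(it,q)$ gives $0=\psi(it)\psi(s)$, hence $\psi(s)=0$. Together with $\psi(1)=\dots=\psi(t-1)=0$ this is exactly the stated formula, and the subcase $t=1$ is simply the instance in which every index is a multiple of $t$.

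For part~(b), suppose $\psi$ vanishes on $\{1,\dots,\lfloor n/2\rfloor\}$. If $\psi(r)\neq0$ for some $r$ with $\lfloor n/2\rfloor<r\le n$, then for any $s$ with $r<s\le n$ we may write $s=r+q$ with $1\le q=s-r\le n-r\le\lfloor n/2\rfloor$, so $\psi(q)=0$; the equation for $(r,q)$ then gives $\psi(r)\psi(s)=0$, hence $\psi(s)=0$. Thus at most one index exceeding $\lfloor n/2\rfloor$ can carry a nonzero value of $\psi$, and there is no constraint on that value: if $\psi$ is supported at a single $r>\lfloor n/2\rfloor$, then on the range $2\le i+j\le n$ the left side of the functional equation vanishes because $2r>n$, and the right side vanishes because $i+j=r$ forces $i,j<r$. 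This gives the dichotomy in~(b).

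Finally I would record the converse, so that the list in~(a)--(b) is exhaustive: every $\psi$ of the stated form satisfies the functional equation. For~(b) this was just observed; for~(a) it amounts to substituting $\psi(i)=\dfrac{tb}{\,i-(i-t)b\,}$ into $\psi(i)\psi(j)=(\psi(i)+\psi(j)-\psi(i)\psi(j))\psi(i+j)$, clearing denominators, and simplifying. I expect this last verification to be the only genuinely computational step — it is the analogue of the identity check at the end of the weight-$1$ Rota-Baxter theorem, routine but a little lengthy; the conceptual content all sits in the recursion and the vanishing arguments above.
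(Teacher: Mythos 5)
Your proposal is correct and follows essentially the same route as the paper: reduce to the functional equation $\psi(i)\psi(j)=(\psi(i)+\psi(j)-\psi(i)\psi(j))\psi(i+j)$, solve it by induction along multiples of the least $t$ with $\psi(t)\neq 0$, and kill all other indices by the vanishing argument. Your version merely unifies the paper's two subcases of (a) by taking $t$ minimal from the outset and is slightly more explicit about non-vanishing denominators and the converse verification.
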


\begin{proof}
By  the identity of the Reynolds operator the function $\psi$ satisfies
\begin{equation}\label{j}
  \psi(i)\psi(j)=(\psi(i)+\psi(j)-\psi(i)\psi(j))\psi(i+j), \quad 2\leq i+j\leq n.
\end{equation}
\item[(a)] Let $\psi(1)\neq0$. Then we can prove the following equalities
by an induction on $i$:
\begin{equation}\label{h}
 \psi_(i)=\frac{\psi(1)}{i-(i-1)\psi(1)}, \quad 2\leq i\leq n.
\end{equation}

     Obviously, the Eq.~\eqref{h} holds for $i = 2$. Let us assume that
the equality holds for $2 < i < n$, and we will prove it for $i + 1$:
using Eq.~\eqref{j} where $j=1$ we have
\begin{align*}
\psi(i+1)&=\frac{\psi(1)\psi(i)}{\psi(1)+\psi(i)-\psi(1)\psi(i)}=\frac{\psi^2(1)}{i-(i-1)\psi(1)}\div
\Big(\psi(1)+\frac{\psi(1)}{i-(i-1)\psi(1)}-\frac{\psi^2(1)}{i-(i-1)\psi(1)}\Big)\\
&=
\frac{\psi^2(1)}{i-(i-1)\psi(1)} \cdot \frac{i-(i-1)\psi(1)}{i\psi(1)-(i-1)\psi^2(1)+\psi(1)-\psi^2(1)}=
\frac{\psi(1)}{i+1-i\psi(1)}.
\end{align*}
 so the induction proves the equalities
(6) for any $i, \quad 1\leq i\leq n$.

Now we will check that Eq.~\eqref{h} satisfies Eq.~\eqref{j}

Considering
\[\psi(i)\psi(j)=\frac{\psi^2(1)}{(i-(i-1)\psi(1))(j-(j-1)\psi(1))}.\]

By the other side
\begin{align*}
&(\psi(i)+\psi(j)-\psi(i)\psi(j))\psi(i+j)=\Big(\frac{\psi(1)}{i-(i-1)\psi(1)}+
\frac{\psi(1)}{j-(j-1)\psi(1)}\\
&  \ \ -
\frac{\psi^2(1)}{(i-(i-1)\psi(1))(j-(j-1)\psi(1))}\Big)\cdot \frac{\psi(1)}{(i+j-(i+j-1)\psi(1))}\\
&=
\frac{\psi(1)(i-(i-1)\psi(1)+j-(j-1)\psi(1))-\psi^2(1)}{(i-(i-1)\psi(1))(j-(j-1)\psi(1))}\cdot \frac{\psi(1)}{(i+j-(i+j-1)\psi(1))}\\
&=\frac{\psi^2(1)}{(i-(i-1)\psi(1))(j-(j-1)\psi(1))}.
\end{align*}

 Let
us suppose that $\psi(1)=0$ and $\psi(t)\neq0$ for
some $2\leq t\leq \lfloor n/2\rfloor$. With a similar induction as the
given for Eq.~\eqref{h}, it is easy to check that the following
equalities hold:
\[\psi(it)=\frac{\psi(t)}{i-(i-1)\psi(t)}, \quad 2\leq i\leq [n/t].\]

If $\{s/t\}\neq0$ we always can write $s=it+q$, where
$t+1\leq s\leq n,\ 1\leq q\leq t-1$. From Eq.~\eqref{j} we
have
\[\psi(s)=\psi(it+q)=\frac{\psi(q)\psi(it)}{\psi(q)+\psi(it)-\psi(q)\psi(it)}=0.\]

So we have \[\psi(s)=0, \quad t+1\leq s\leq n, \quad  \{s/t\}\neq0.\]

\item[(b)] Let $\psi(r)=0, \ 1\leq r\leq t-1$ and
$\psi(t)\neq0, \ \lfloor n/2\rfloor<t\leq n$. As in the previous case we can
always write $s=t+q$ where $1\leq q\leq n-t$ then
\[\psi(s)=\psi(t+q)=\frac{\psi(t)\psi(q)}{\psi(t)+\psi(q)-\psi(t)\psi(q)}=0.\]
Hence we have $\psi(s)=0$ where $t+1\leq s\leq n$.
\end{proof}

\begin{proposition}  Let $P_k$ be a homogeneous Reynolds operator with degree $k \ (k\neq0)$ on the  null-filiform associative algebra.
Then
\[P_k(e_i)=
\begin{cases}
\psi(i)e_{i+k}, & 1\leq i \leq n-k, \\
0, & n-k+1\leq i \leq n,
\end{cases}
\]
and the function $\psi$ satisfies that
\begin{equation}\label{g}
\begin{aligned}
 &\psi(i)\psi(j)=(\psi(i)+\psi(j))\psi(i+j+k), && 2\leq i+j\leq n-2k, \\
 &\psi(i)\psi(j)\psi(i+j+2k) =0, && 2\leq i+j\leq n-3k.
 \end{aligned}
 \end{equation}
 \end{proposition}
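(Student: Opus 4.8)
The plan is to follow the same two-stage strategy used for the homogeneous Rota-Baxter operator of weight $0$ with degree $k$ (the Proposition preceding Theorem~\ref{d1}), adapting it to accommodate the extra summand $-P(x)P(y)$ in the Reynolds identity. Recall that by \eqref{a} a homogeneous operator of degree $k$ a priori has a second ``wrap-around'' branch $P_k(e_i)=\psi(i)e_{i+k-n}$ for $n-k+1\le i\le n$; the first task is to show this branch is forced to vanish, and the second is to read off the functional equations on $\psi$.

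First I would evaluate the Reynolds identity on the pair $x=e_{n-k}$, $y=e_{n-k+j}$ with $1\le j\le k$. On one side $P(e_{n-k})P(e_{n-k+j})=\psi(n-k)\psi(n-k+j)\,e_ne_j=0$, since $e_ne_j=e_{n+j}=0$. On the other side, using $P(e_{n-k})=\psi(n-k)e_n$ and $P(e_{n-k+j})=\psi(n-k+j)e_j$, both $P(e_{n-k})e_{n-k+j}$ and $P(e_{n-k})P(e_{n-k+j})$ vanish because their indices exceed $n$, so the argument of $P$ collapses to $\psi(n-k+j)e_{n-k+j}$; applying $P$ (second branch of \eqref{a}) gives $\psi^2(n-k+j)e_j$. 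Comparing, $\psi(n-k+j)=0$ for every $1\le j\le k$, i.e. $\psi(i)=0$ for $n-k+1\le i\le n$, which is precisely the stated form of $P_k$.

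Next I would extract the relations on $\psi$. For $2\le i+j\le n-2k$ one has $P(e_i)P(e_j)=\psi(i)\psi(j)\,e_{i+j+2k}$, while $e_iP(e_j)+P(e_i)e_j-P(e_i)P(e_j)=(\psi(i)+\psi(j))e_{i+j+k}-\psi(i)\psi(j)e_{i+j+2k}$, whose image under $P$ is $(\psi(i)+\psi(j))\psi(i+j+k)\,e_{i+j+2k}-\psi(i)\psi(j)\,P(e_{i+j+2k})$ (here $i+j+k\le n-k$ always, so the first branch applies). The case split on $i+j+2k$ is what produces the two different ranges in \eqref{g}: if $n-3k<i+j\le n-2k$ then $n-k<i+j+2k\le n$ and $P(e_{i+j+2k})=0$ by the first stage, so only the coefficient of $e_{i+j+2k}$ survives and one gets $\psi(i)\psi(j)=(\psi(i)+\psi(j))\psi(i+j+k)$; if instead $2\le i+j\le n-3k$ then $i+j+2k\le n-k$ and $P(e_{i+j+2k})=\psi(i+j+2k)e_{i+j+3k}$ introduces an independent basis vector $e_{i+j+3k}$, distinct from $e_{i+j+2k}$ since $k\ge1$ and genuinely present since $i+j+3k\le n$; comparing the coefficients of $e_{i+j+2k}$ and $e_{i+j+3k}$ separately then yields both $\psi(i)\psi(j)=(\psi(i)+\psi(j))\psi(i+j+k)$ and $\psi(i)\psi(j)\psi(i+j+2k)=0$. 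Together these are exactly \eqref{g}.

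I do not anticipate a serious obstacle: the computations are linear and run parallel to the Rota-Baxter case, the only new ingredient being the bookkeeping of the $-\psi(i)\psi(j)e_{i+j+2k}$ term and the ensuing case distinction on the index $i+j+2k$, which is what restricts the cubic relation to the smaller range $2\le i+j\le n-3k$. The one point requiring mild care is to keep track, throughout the case analysis, of which branch of \eqref{a} each index $i+j+k$, $i+j+2k$, $i+j+3k$ falls into, and to verify that $e_{i+j+2k}$ and $e_{i+j+3k}$ are distinct so that coefficient comparison is legitimate.
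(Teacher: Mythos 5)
Your proposal is correct and follows essentially the same route as the paper: the wrap-around branch is killed by evaluating the identity on $e_{n-k}$, $e_{n-k+j}$ (where the $-P(x)P(y)$ term vanishes since $e_ne_j=0$), and the relations \eqref{g} follow by comparing coefficients in the general case. Your case split on whether $i+j+2k$ exceeds $n-k$, which explains why the cubic relation only holds for $i+j\le n-3k$, merely fills in the details the paper summarizes as ``it is not difficult to obtain Eq.~\eqref{g}.''
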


\begin{Proof}
Let us consider for $1\leq j\leq k$
\[P(e_{n-k})P(e_{n-k+j})=\psi(n-k)\psi(n-k+j)e_ne_j=0. \]

By the other side
\begin{align*}
P(e_{n-k})P(e_{n-k+j})& =P(e_{n-k}P(e_{n-k+j})+P(e_{n-k})e_{n-k+j}-P(e_{n-k})P(e_{n-k+j}))\\
& = P(\psi(n-k+j)e_{n-k}e_j+\psi(n-k)e_ne_{n-k+j})\\
&=P(\psi(n-k+j)e_{n-k+j})=\psi^2(n-k+j)e_j.
\end{align*}
Hence $\psi(m)=0$ where $n-k+1\leq m\leq n$.

By the identity of the Reynolds operator  it is not difficult to obtain Eq.~\eqref{g}.
\end{Proof}

\begin{cons}Let $P_k$ be a homogeneous Reynolds operator with degree $k \ (k\geq\lfloor n/2\rfloor)$ on the null-filiform associative algebra.
Then
\[P_k(e_i)=
\begin{cases}
\psi(i)e_{i+k}, & 1\leq i \leq n-k, \\
0, & n-k+1\leq i \leq n,
\end{cases}
\]
where $\psi(i)\in\mathbb{C}$.
\end{cons}

Now we give next theorem.

\begin{theorem}  Let $P_1$ be a Reynolds operator  with degree $1$ on the null-filiform associative algebra. Then
\begin{enumerate}
\item[(a)] If $\psi(t)\neq0$ for some $1\leq t\leq \lfloor (n-2)/2\rfloor$ then
\[P_1(e_i)=
\begin{cases}
\frac{(t+1)\psi(t)}{i+1}e_{i+1}, & i+1=0 \ (\bmod \ t+1), \\
0, & \text{otherwise};
\end{cases}
\]
\item[(b)] If $\psi(t)=0$ for all $1\leq t\leq \lfloor (n-2)/2\rfloor$ then
\[P_1(e_i)=
\begin{cases}
\psi(r)e_{r+1}, & i=r, \\
\psi(r+1)e_{r+2}, & i=r+1, \\
0, & \text{otherwise},
\end{cases}
\]
for some fixed $\lfloor (n-2)/2\rfloor < r \leq n-2$.
\end{enumerate}

\end{theorem}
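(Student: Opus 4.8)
The plan is to reduce everything to the functional equations satisfied by the coefficient function $\psi$, exactly as in the earlier Rota--Baxter and Reynolds results. By the preceding Proposition applied with $k=1$, any homogeneous Reynolds operator of degree $1$ has the form $P_1(e_i)=\psi(i)e_{i+1}$ for $1\le i\le n-1$ and $P_1(e_n)=0$ (that is, $\psi(n)=0$), where $\psi$ satisfies the system~\eqref{g} with $k=1$, namely $\psi(i)\psi(j)=(\psi(i)+\psi(j))\psi(i+j+1)$ for $2\le i+j\le n-2$ and $\psi(i)\psi(j)\psi(i+j+2)=0$ for $2\le i+j\le n-3$; conversely, any such $\psi$ defines a Reynolds operator, by running the Proposition's computation backwards. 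So it suffices to solve this system.

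For part~(a), let $t$ be the smallest index with $\psi(t)\ne0$; the hypothesis is precisely $t\le\lfloor(n-2)/2\rfloor$, and $\psi(1)=\dots=\psi(t-1)=0$ by minimality. I would first repeat the induction used for Theorem~\ref{d1}: putting $i=t$ and $j=(t+1)s-1$ in the first equation (legitimate while $(t+1)(s+1)\le n$, and with $\psi(t)+\psi((t+1)s-1)=\psi(t)\,\frac{s+1}{s}\ne0$) yields $\psi((t+1)s-1)=\psi(t)/s$ for $1\le s\le\lfloor n/(t+1)\rfloor$, which is the asserted coefficient $\frac{(t+1)\psi(t)}{i+1}$ at the indices $i$ with $i+1\equiv0\ (\bmod\ t+1)$. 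It then remains to show $\psi$ vanishes at every other index $m\le n-1$. The case $m\le t-1$ is minimality. If $t+2\le m\le n-1$ with $m+1\not\equiv0\ (\bmod\ t+1)$ and $\psi(m)\ne0$, the first equation with $i=t$, $j=m-t-1$ forces $\psi(m-t-1)\ne0$; since $m-t-1<m$, taking $m$ minimal among such ``bad'' indices and using the established pattern gives $m-t-1=(t+1)s-1$, hence $m=(t+1)(s+1)-1$, i.e. $m+1\equiv0\ (\bmod\ t+1)$ after all, a contradiction. The one remaining index is $m=t+1$: for $t\ge2$ one feeds $i=t-1$, $j=t+1$ into the first equation and uses $\psi(2t+1)=\psi(t)/2\ne0$ to conclude $\psi(t+1)=0$; for $t=1$ one uses $i=j=1$ in the second equation to get $\psi(4)=0$ and then $i=1$, $j=2$ in the first to get $\psi(2)=0$.

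For part~(b), the hypothesis is $\psi(t)=0$ for all $1\le t\le\lfloor(n-2)/2\rfloor$; let $r$ be the smallest index with $\psi(r)\ne0$ (if none exists then $P_1=0$, a degenerate instance of the conclusion). Then $r>\lfloor(n-2)/2\rfloor$, so $2r>n-2$. For $r+2\le m\le n-1$, the first equation with $i=r$ and $j=m-r-1$ has $j\le n-2-r<r$, hence $\psi(j)=0$, so the equation collapses to $\psi(r)\psi(m)=0$ and $\psi(m)=0$; combined with minimality below $r$ and $\psi(n)=0$, only $\psi(r)$ and $\psi(r+1)$ can be nonzero. Finally one checks that no surviving instance of either functional equation, and no product $P_1(e_r)P_1(e_{r+1})$ (which vanishes because $2r+3>n$), imposes any relation between $\psi(r)$ and $\psi(r+1)$, so these two values are arbitrary; this gives~(b), with the harmless convention that if only $\psi(n-1)\ne0$ one takes $r=n-2$. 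The routine part is the induction in~(a); the delicate part is the range bookkeeping — ensuring every use of the functional equations respects $2\le i+j\le n-2$ (resp. $\le n-3$), and in particular pinning down $\psi(t+1)$ in~(a), where the argument via $j=m-t-1$ degenerates and one must split $t=1$ from $t\ge2$, and verifying in~(b) that $r$ and $r+1$ are never jointly constrained.
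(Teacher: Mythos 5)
Your proposal is correct in substance and follows the route the paper intends (reduce to the system \eqref{g} with $k=1$ via the preceding Proposition, then solve by induction), but it is worth recording that you supply a step that a literal transcription of the proof of Theorem~\ref{d1} would \emph{not} give. In the Rota--Baxter case the degree-$1$ theorem splits into the subcases $\psi(1)\psi(2)\neq0$ and $\psi(1)\neq0,\ \psi(2)=0$, whereas the Reynolds statement has no such split; the reason is precisely the extra cubic relation $\psi(i)\psi(j)\psi(i+j+2)=0$ in \eqref{g}, which you use (via $i=j=1$ and then $i=1,j=2$ for $t=1$, and via $i=t-1,\ j=t+1$ together with $\psi(2t+1)=\psi(t)/2\neq0$ for $t\ge2$) to force $\psi(t+1)=0$. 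That is the one genuinely new ingredient relative to Theorem~\ref{d1}, and your proof correctly isolates it; the rest (the induction $\psi((t+1)s-1)=\psi(t)/s$, the minimal-bad-index argument, and the analysis in (b) showing $\psi(r),\psi(r+1)$ are unconstrained because $2r>n-2$ pushes every relevant instance of \eqref{g} out of range) matches the pattern of the earlier proofs.

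One boundary point deserves an explicit caveat. Your treatment of the $t=1$ subcase needs the instance $i=j=1$ of the cubic relation, which lies in the range $2\le i+j\le n-3$ only for $n\ge5$. For $n=4$ that range is empty, the system \eqref{g} reduces to the single equation $\psi(1)^2=2\psi(1)\psi(3)$, and $\psi(2)$ is genuinely unconstrained: one checks directly that $P(e_1)=e_2$, $P(e_2)=ce_3$, $P(e_3)=\tfrac12e_4$, $P(e_4)=0$ is a Reynolds operator for every $c$. So the statement of part (a) itself fails at $n=4$, and your proof silently assumes $n\ge5$ at exactly that step; this should be stated (or the theorem restricted) rather than left implicit in the ``range bookkeeping.''
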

\begin{proof}The proof is carrying out by applying similar arguments as in the proof of  Theorem~\ref{d1}.
\end{proof}
\subsection{Nijenhuis operator}

\begin{theorem} Let $P_0$ be a homogeneous Nijenhuis operator with degree $0$ on the null-filiform associative algebra. Then
\[P_0(e_i)=a e_i, \quad 1\leq i\leq n,\]
where $a\in\mathbb{C}$.
\end{theorem}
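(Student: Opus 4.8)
The plan is to write a homogeneous degree-$0$ operator $P_0$ as $P_0(e_i) = \psi(i)e_i$ for $1 \le i \le n$ and then extract the constraints on $\psi$ coming from the Nijenhuis identity $P(x)P(y) = P(xP(y)+P(x)y-P(xy))$. First I would compute both sides on basis vectors $e_i, e_j$ with $2 \le i+j \le n$: the left side gives $\psi(i)\psi(j)e_{i+j}$, while the right side, using $e_i e_j = e_{i+j}$, gives $P\big(\psi(j)e_{i+j}+\psi(i)e_{i+j}-e_{i+j}\big) = (\psi(i)+\psi(j)-1)\psi(i+j)e_{i+j}$. Comparing coefficients yields the functional equation
\[
\psi(i)\psi(j) = \big(\psi(i)+\psi(j)-1\big)\psi(i+j), \qquad 2 \le i+j \le n.
\]

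Next I would analyze this recursion. Setting $a = \psi(1)$ and $j=1$ gives $\psi(i+1)\big(\psi(i)+a-1\big) = a\psi(i)$, which I would use to guess and then verify by induction that $\psi(i) = a$ for all $i$ — indeed if $\psi(i)=a$ then $\psi(i+1)(a+a-1) = a^2$ forces... wait, that gives $\psi(i+1)(2a-1) = a^2$, so one has to be more careful. The cleaner route: rewrite the identity as a statement about $1-\psi$. Put $\phi(i) = 1 - \psi(i)$; then $\psi(i)\psi(j) = (\psi(i)+\psi(j)-1)\psi(i+j)$ becomes $(1-\phi(i))(1-\phi(j)) = (1-\phi(i)-\phi(j))(1-\phi(i+j))$, i.e. $1 - \phi(i)-\phi(j)+\phi(i)\phi(j) = 1-\phi(i+j)-\phi(i)-\phi(j)+(\phi(i)+\phi(j))\phi(i+j)$, which simplifies to $\phi(i)\phi(j) = -\phi(i+j)+(\phi(i)+\phi(j))\phi(i+j) = \phi(i+j)\big(\phi(i)+\phi(j)-1\big)$ — the same shape, not obviously simpler. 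So instead I would directly handle two cases for the original equation. Case $\psi(1) = a$: if $2a - 1 \ne 0$ we are forced into $\psi(2) = a^2/(2a-1)$, and then iterating and demanding consistency across all the relations $\psi(i)\psi(j)=(\psi(i)+\psi(j)-1)\psi(i+j)$ for different splittings of $i+j$ should collapse everything to $\psi(i)=a$; the overdetermined nature of the system (many ways to write the same $i+j$) is what kills the spurious branches. If $2a-1 = 0$, i.e. $a = 1/2$, one checks $\psi(i) = 1/2$ for all $i$ solves it directly.

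The key step I expect to be the main obstacle is ruling out all solutions other than the constant one — that is, showing the functional equation forces $\psi$ to be a single constant $a$ rather than allowing a constant on some sub-semigroup and zero elsewhere (as happened for Rota-Baxter and Reynolds operators of weight $0$). The crucial structural difference here is the $-1$ term: in the Rota-Baxter/Reynolds cases $\psi \equiv 0$ on a tail was permitted because $\psi(i)\psi(j)=0$ was consistent with $\psi(i+j)=0$, but here the relation $\psi(i)\psi(j) = (\psi(i)+\psi(j)-1)\psi(i+j)$ with, say, $\psi(i)=\psi(j)=0$ forces $0 = -\psi(i+j)$, i.e. $\psi(i+j)=0$ is allowed, yet if $\psi(i)=0$ and $\psi(j)\ne 0$ we get $0 = (\psi(j)-1)\psi(i+j)$, which ties things together. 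The clean way to finish is: use $j=1$ repeatedly to express every $\psi(i)$ as a fixed rational function of $a=\psi(1)$ (showing $\psi$ is determined by $a$ alone), then feed one extra relation, e.g. $\psi(2)\psi(2) = (2\psi(2)-1)\psi(4)$ together with $\psi(1)\psi(3) = (\psi(1)+\psi(3)-1)\psi(4)$, to pin down the rational function and conclude it is the constant function $\psi(i) = a$. Finally I would verify the converse: for any $a \in \mathbb{C}$, the map $P_0(e_i) = a e_i$ satisfies the Nijenhuis identity, since both sides reduce to $a^2 e_{i+j} = (2a-1)a\,e_{i+j}$ — which forces $a^2 = (2a-1)a$, i.e. $a^2 = a$, so in fact only $a \in \{0,1\}$... meaning I should double-check the statement's ``$a\in\mathbb C$'' against this computation, and if the identity genuinely holds for all $a$ the resolution must be that $e_ie_j$ vanishes for $i+j>n$, so the only binding relations are those with $2\le i+j\le n$ and the coefficient comparison there is exactly $\psi(i)\psi(j)=(\psi(i)+\psi(j)-1)\psi(i+j)$, satisfied by $\psi\equiv a$ iff $a^2=(2a-1)a$; I would present the argument so that this final consistency check is transparent.
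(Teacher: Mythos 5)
There is a genuine error at the very first step, and it derails everything that follows. In the Nijenhuis identity $P(x)P(y)=P(xP(y)+P(x)y-P(xy))$, the last term inside the outer $P$ is $-P(e_ie_j)=-P(e_{i+j})=-\psi(i+j)e_{i+j}$, not $-e_{i+j}$: you dropped the factor $\psi(i+j)$. The correct functional equation is therefore
\[
\psi(i)\psi(j)=\bigl(\psi(i)+\psi(j)-\psi(i+j)\bigr)\psi(i+j), \qquad 2\le i+j\le n,
\]
which is what the paper uses, rather than your $\psi(i)\psi(j)=(\psi(i)+\psi(j)-1)\psi(i+j)$. The inconsistency you detected at the end --- that $\psi\equiv a$ would force $a^2=(2a-1)a$, i.e.\ $a\in\{0,1\}$, contradicting the statement's arbitrary $a\in\mathbb{C}$ --- is precisely the symptom of this mistake, not a flaw in the theorem. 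With the correct equation, $\psi\equiv a$ gives $(a+a-a)a=a^2$ on both sides, so every $a$ works.

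Moreover, once the equation is corrected, the entire case analysis you anticipate (the $2a-1=0$ branch, the overdetermined-system argument, ruling out a ``zero tail'') becomes unnecessary. Setting $i=j=1$ yields $\psi(1)^2=2\psi(1)\psi(2)-\psi(2)^2$, i.e.\ $(\psi(1)-\psi(2))^2=0$, so $\psi(2)=\psi(1)$; and inductively, taking $j=1$ with $\psi(i)=\psi(1)=a$ gives $a^2=(2a-\psi(i+1))\psi(i+1)$, i.e.\ $(\psi(i+1)-a)^2=0$. The perfect square is the structural feature that distinguishes the Nijenhuis case from the Rota--Baxter and Reynolds cases and forces the single constant solution with no degenerate branches. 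This is exactly the paper's (short) argument; your proposal, as written, does not reach it.
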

\begin{Proof}
Considering \[P(e_i)P(e_j)=\psi(i)\psi(j)e_{i+j}, \quad 2\leq i+j\leq n.\]
On the other hand
\begin{align*}
P(e_i)P(e_j)&=P(e_iP(e_j)+P(e_i)e_j-P(e_ie_j))\\
&=P(\psi(j)e_{i+j}+\psi(i)e_{i+j}-\psi(i+j)e_{i+j})=(\psi(i)+\psi(j)-\psi(i+j))\psi(i+j)e_{i+j}.
\end{align*}

The comparison of both linear combinations implies that:
\[\psi(i)\psi(j)=(\psi(i)+\psi(j)-\psi(i+j))\psi(i+j), \quad 2\leq i+j\leq n.\]

From previous system equation when $i=j=1$
\[\psi^2(1)=2\psi(1)\psi(2)-\psi^2(2),\]
we obtain $\psi(2)=\psi(1)$.

Using previous system equations and by induction supposition we can prove that
\[\psi(i)=\psi(1), \quad 2\leq i\leq n.\]
Finally we denote by $a=\psi(1)$.
\end{Proof}

\begin{proposition}Let $P_k$ be a homogeneous Nijenhuis operator with degree $k \ (k\geq\lfloor n/2\rfloor)$ on  the null-filiform associative algebra.
Then
\[P_k(e_i)=
\begin{cases}
\psi(i)e_{i+k}, & 1\leq i \leq n-k, \\
0, & n-k+1\leq i \leq n,
\end{cases}
\]
where $\psi(i)\in\mathbb{C}$.
\end{proposition}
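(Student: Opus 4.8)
The plan is to proceed as in the analogous statements above for the Rota-Baxter and Reynolds operators, in two steps. First, show that the Nijenhuis identity forces $\psi(m)=0$ for $n-k+1\le m\le n$, so that $P_k$ has the asserted shape; then show conversely that every homogeneous operator of degree $k$ whose function $\psi$ vanishes on the range $n-k+1\le m\le n$ does satisfy the Nijenhuis identity, so that $\psi(1),\dots,\psi(n-k)$ are genuinely free parameters. The hypothesis $k\ge\lfloor n/2\rfloor$ enters only through the inequality $n-2k\le 1$: since $i+j\ge 2$ for any basis indices $e_i,e_j$, this yields $i+j+k>n-k$ and $i+j+2k>n$, and it is precisely these two facts that make all the ``interior'' Nijenhuis relations collapse.

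For the first step I would evaluate the Nijenhuis identity on the pairs $(e_{n-k},e_{n-k+j})$ with $1\le j\le k$, exactly as was done above for the Reynolds operator. Here $P(e_{n-k})=\psi(n-k)e_n$ and $P(e_{n-k+j})=\psi(n-k+j)e_j$, so the left-hand side $\psi(n-k)\psi(n-k+j)\,e_ne_j$ vanishes. On the right-hand side $P(e_{n-k})e_{n-k+j}$ vanishes as well, whereas $e_{n-k}e_{n-k+j}=e_{2n-2k+j}$ is nonzero precisely when $j\le 2k-n$, in which case $P(e_{2n-2k+j})=\psi(2n-2k+j)\,e_{n-k+j}$. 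Hence the identity collapses to
\[
\psi(n-k+j)\bigl(\psi(n-k+j)-\psi(2n-2k+j)\bigr)e_j=0,
\]
with the convention $\psi(2n-2k+j)=0$ when $j>2k-n$. For $j>2k-n$ this gives $\psi(n-k+j)=0$ at once, and since the minimal value $k=\lfloor n/2\rfloor$ forces $2k-n\le 0$, every $j\ge 1$ falls into this case and the minimal case is finished immediately. For larger $k$ the remaining indices are $m=n-k+j$ with $n-k+1\le m\le k$; there $2n-2k+j=m+(n-k)>m$, so a downward induction on $m$, starting from $m=n$ where $\psi$ already vanishes, turns the displayed relation into $\psi(m)^2=0$ and gives $\psi(m)=0$ for all $n-k+1\le m\le n$.

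For the converse step, assume $\psi$ vanishes on $n-k+1\le m\le n$ and test the Nijenhuis identity on an arbitrary pair $(e_i,e_j)$. If $i>n-k$ or $j>n-k$, then $P$ annihilates $e_i$ or $e_j$ and the left-hand side is $0$; on the right-hand side $e_iP(e_j)$ and $P(e_i)e_j$ each vanish (because $P$ already kills a generator, or because the relevant index exceeds $n$), and $P(e_ie_j)$ vanishes because $e_ie_j$ is either $0$ or equal to $e_{i+j}$ with $i+j>n-k$, where $\psi=0$. If instead $i,j\le n-k$, then the left-hand side $\psi(i)\psi(j)e_{i+j+2k}$ vanishes since $i+j+2k>n$, while the argument of $P$ on the right-hand side is a multiple of $e_{i+j+k}$ (read as $0$ when $i+j+k>n$); applying $P$ then produces a scalar multiple of $\psi(i+j+k)$, which vanishes because $i+j+k>n-k$. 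Thus both sides vanish, and no condition whatsoever is imposed on $\psi(1),\dots,\psi(n-k)$.

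The only delicate point is the index bookkeeping in the first step once $k$ is strictly larger than $\lfloor n/2\rfloor$: then the products $e_{n-k}e_{n-k+j}$ do not all vanish, the term $P(xy)$ of the Nijenhuis identity genuinely contributes, and one must carry out the induction on $m$ in strictly decreasing order so that $\psi(m+(n-k))$ is already known to be zero at the moment the relation $\psi(m)\bigl(\psi(m)-\psi(m+(n-k))\bigr)=0$ is invoked. Everything else is a routine substitution using $e_ae_b=e_{a+b}$ together with the convention $e_c=0$ for $c>n$.
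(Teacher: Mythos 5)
Your proof is correct and follows the same template the paper itself uses for the analogous degree-$k$ propositions (evaluating the identity on the pairs $(e_{n-k},e_{n-k+j})$ to force $\psi(m)=0$ for $n-k+1\le m\le n$); the paper only says ``similar arguments'' here, and your write-up supplies the two details it leaves implicit, namely the downward induction needed to absorb the extra $-P(xy)$ term when $2k>n$, and the converse verification that $\psi(1),\dots,\psi(n-k)$ are genuinely unconstrained. Both of these checks are carried out correctly, so nothing is missing.
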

\begin{proof}The proof is carrying out by applying similar arguments as in the proof of the  previous theorem.
\end{proof}

Now we consider a homogeneous Nijenhuis operator with degree $1$.

\begin{theorem}  Let $P_1$ be a homogeneous Nijenhuis operator with degree $1$ on the null-filiform associative algebra. Then

\begin{enumerate}
\item[(a)] If $\psi(1)\neq0$ then $P_0(e_i)=\psi(i)e_{i+1}$, where the function $\psi$ satisfies the next recurrence formula
\[\psi(i)=\frac{\psi(1)\psi(i-2)}{\psi(1)+\psi(i-2)-\psi(i-1)}, \quad 3\leq i\leq n-1;\]
\item[(b)] If $\psi(1)=0$ and $\psi(t)\neq0$ for some $2\leq t\leq \lfloor (n-2)/2\rfloor$ then
\[P_0(e_i)=
\begin{cases}
\frac{(t+1)\psi(t)}{i+1}e_{i+1}, & i+1=0 \ (\bmod \ t+1), \\
0, & \text{otherwise};
\end{cases}
\]
\item[(c)] If $\psi(t)=0$ for all $1\leq t\leq \lfloor (n-2)/2\rfloor$ then

\[P_0(e_i)=
\begin{cases}
\psi(r)e_{r+1}, & i=r, \\
0, & \text{otherwise},
\end{cases}
\]
for some fixed $\lfloor (n-2)/2\rfloor < r \leq n-1$.
\end{enumerate}

\end{theorem}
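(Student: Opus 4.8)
The plan is to mimic, step by step, the treatment of Theorem~\ref{d1} and of the degree-$0$ Nijenhuis theorem. A homogeneous operator of degree $1$ has the shape $P_1(e_i)=\psi(i)e_{i+1}$ for $1\le i\le n-1$ and $P_1(e_n)=\psi(n)e_1$, so the first thing I would do is eliminate the wrap-around coefficient $\psi(n)$: computing $P_1(e_n)P_1(e_{n-1})$ in two ways, the left-hand side is $\psi(n)\psi(n-1)e_1e_n=0$, while on the right-hand side $e_nP_1(e_{n-1})$ and $P_1(e_ne_{n-1})$ vanish (their degrees exceed $n$) and $P_1(e_n)e_{n-1}=\psi(n)e_n$, so the right-hand side equals $P_1(\psi(n)e_n)=\psi(n)^2e_1$; hence $\psi(n)=0$ and $P_1(e_n)=0$. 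Next I would expand the Nijenhuis identity on the basis: for $2\le i+j\le n-2$ the left-hand side is $\psi(i)\psi(j)e_{i+j+2}$ and the right-hand side is $\bigl(\psi(i)+\psi(j)-\psi(i+j)\bigr)\psi(i+j+1)e_{i+j+2}$, so
\begin{equation}\label{starnij}
\psi(i)\psi(j)=\bigl(\psi(i)+\psi(j)-\psi(i+j)\bigr)\psi(i+j+1),\qquad 2\le i+j\le n-2,
\end{equation}
and I would check that the ranges $i+j\in\{n-1,n\}$ give no new relation once $\psi(n)=0$. Everything then reduces to analysing \eqref{starnij}.

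For case (a), setting $j=1$ in \eqref{starnij} yields, for $3\le i\le n-1$, exactly the stated recurrence $\psi(i)=\frac{\psi(1)\psi(i-2)}{\psi(1)+\psi(i-2)-\psi(i-1)}$, so $\psi$ is determined by the two free values $\psi(1),\psi(2)$; here one must point out that the recurrence is written assuming the denominators nonvanishing, and handle the degenerate case $\psi(1)+\psi(i-2)-\psi(i-1)=0$ separately (then \eqref{starnij} forces $\psi(i-2)=0$ and one checks the subsequent values are forced to be $0$). For case (b), with $\psi(1)=0$ and $t$ the smallest index with $\psi(t)\ne 0$, $2\le t\le\lfloor(n-2)/2\rfloor$, an induction on $s$ using \eqref{starnij} with $i=t$ — in which the term $\psi(i+j)$ drops out because $\psi$ vanishes at the relevant intermediate indices — gives $\psi\bigl((t+1)s-1\bigr)=\psi(t)/s$ for $1\le s\le\lfloor n/(t+1)\rfloor$ and $\psi(m)=0$ otherwise, which is the formula of (b); this is verbatim the argument of Theorem~\ref{d1}(c). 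For case (c), where $\psi$ vanishes on $[1,\lfloor(n-2)/2\rfloor]$, the point to stress is that the extra term $-\psi(i+j)$ makes the instance $i=j=1$ of \eqref{starnij} informative (unlike the Rota-Baxter situation): \eqref{starnij} with small $i,j$ forces $\psi(m)\psi(m+1)=0$ for $2\le m\le n-3$, and a short induction using \eqref{starnij} with $i=1$ then shows that at most one index $r$ with $\lfloor(n-2)/2\rfloor<r\le n-1$ can carry a nonzero value; this is why one free coefficient survives here against the two of Theorem~\ref{d1}(d).

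Finally I would verify the converse, that each of the three families indeed satisfies the Nijenhuis identity. For (b) and (c) this is a direct substitution into \eqref{starnij}. The genuine obstacle is case (a): one must show that the values produced by the $j=1$ recurrence satisfy \eqref{starnij} for all admissible pairs $(i,j)$, not merely for $j=1$. I expect to prove this by induction on $\min(i,j)$, the base case being the recurrence itself and the inductive step expressing the instance $(i,j)$ as a consequence of instances with smaller $\min(i,j)$ — concretely a polynomial identity in $\psi(1),\psi(2)$ and finitely many recursively defined terms, to be confirmed by direct manipulation. A conceptual shortcut worth recording is that the Nijenhuis identity says exactly that $P_1$ is an algebra homomorphism from $(\mathcal{A},\ast)$ to $\mathcal{A}$, where $x\ast y=xP_1(y)+P_1(x)y-P_1(xy)$; since $(\mathcal{A},\ast)$ is associative and, as one computes, generated by $e_1$ and $e_2$, this explains structurally why in case (a) the operator is pinned down by $\psi(1)$ and $\psi(2)$ and why the recurrence propagates to the whole of \eqref{starnij}.
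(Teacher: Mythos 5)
Your plan is essentially the paper's: the paper proves this result by declaring it ``similar to Theorem~\ref{d1}'', and that is exactly what you do --- kill the wrap-around coefficient $\psi(n)$ by evaluating the identity on the pair $(e_n,e_{n-1})$, reduce everything to the functional equation $\psi(i)\psi(j)=(\psi(i)+\psi(j)-\psi(i+j))\psi(i+j+1)$ for $2\le i+j\le n-2$, and run the case analysis on the first nonvanishing value of $\psi$, with the extra term $-\psi(i+j)$ correctly identified as the reason case (c) retains one free parameter instead of the two of Theorem~\ref{d1}(d). All of that is sound, and since the theorem only asserts the recurrence and the vanishing pattern as \emph{necessary} conditions, the forward direction you give already suffices.

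One caveat: the ``conceptual shortcut'' you record for the converse in case (a) is circular. The deformed product $x\ast y=xP_1(y)+P_1(x)y-P_1(xy)$ is not associative for an arbitrary linear map $P_1$; its associativity is a consequence of (essentially equivalent to) the Nijenhuis identity you are trying to establish, so you cannot invoke it to show that the $j=1$ instances of the functional equation propagate to all pairs $(i,j)$. If you want the converse in (a) --- which the stated theorem does not actually claim --- it must rest on the direct induction you sketch (``to be confirmed by direct manipulation''), which is currently an unverified polynomial identity in $\psi(1),\psi(2)$; either carry that computation out or drop the converse claim. A second, minor point: in case (c) the relation $\psi(m)\psi(m+1)=0$ comes from choosing a splitting $i+j=m$ with \emph{both} parts at most $\lfloor (n-2)/2\rfloor$ (so that $\psi(i)=\psi(j)=0$), not from the instance $i=1$ alone, and the subsequent uniqueness of $r$ uses the full family of splittings of $r'-1$; your phrasing should reflect that.
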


\begin{proof}The proof is carrying out by applying similar arguments as in the proof of Theorem~\ref{d1}.
\end{proof}

\subsection{Average operator}
 \begin{theorem}  Let $P_0$ be a homogeneous Average operator with degree $0$ on the  null-filiform associative algebra. Then

\begin{enumerate}
\item[(a)] If $\psi(t)\neq0$ for some $1\leq t\leq \lfloor n/2\rfloor$ then
\[P_0(e_i)=
\begin{cases}
\psi(t)e_i, & i=0 \ (\bmod \ t), \\
0, & \text{otherwise};
\end{cases}
\]
\item[(b)] If $\psi(t)=0$ for all $1\leq t\leq \lfloor n/2\rfloor$ then

\[P_0(e_i)=
\begin{cases}
\psi(r)e_r, & i=r, \\
0, & \text{otherwise},
\end{cases}
\]
for some fixed $\lfloor n/2\rfloor < r \leq n$.
\end{enumerate}

\end{theorem}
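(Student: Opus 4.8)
The plan is to repeat the argument used above for the degree-$0$ Rota--Baxter and Reynolds operators. Since $P_0$ has degree $0$, the shape \eqref{a} forces $P_0(e_i)=\psi(i)e_i$ for $1\le i\le n$. Substituting $x=e_i$, $y=e_j$ into the average identity $P(x)P(y)=P\bigl(xP(y)\bigr)$ and using $e_ie_j=e_{i+j}$ when $i+j\le n$ (both sides vanishing otherwise), I would get
\begin{equation*}
P_0(e_i)P_0(e_j)=\psi(i)\psi(j)\,e_{i+j},\qquad
P_0\bigl(e_iP_0(e_j)\bigr)=\psi(j)\psi(i+j)\,e_{i+j},
\end{equation*}
and comparison of coefficients turns the identity into the single family of relations
\begin{equation*}
\psi(j)\bigl(\psi(i)-\psi(i+j)\bigr)=0,\qquad 1\le i,\ 1\le j,\ i+j\le n.
\end{equation*}
Reading this computation backwards shows that these relations are also \emph{sufficient}, so the whole problem reduces to determining which $\mathbb C$-valued functions $\psi$ on $\{1,\dots,n\}$ satisfy them; the two cases of the theorem will be the two cases of this elementary combinatorial question.

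For part~(a) I would take $t$ to be the \emph{smallest} index with $\psi(t)\neq 0$; the hypothesis of~(a) says precisely that this $t$ satisfies $t\le\lfloor n/2\rfloor$, i.e. $2t\le n$. Setting $j=t$ in the above relations and cancelling the nonzero factor $\psi(t)$ gives $\psi(i)=\psi(i+t)$ for $1\le i\le n-t$, so $\psi$ is periodic of period $t$ on $\{1,\dots,n\}$. Since $\psi(1)=\dots=\psi(t-1)=0$ by minimality, iterating this relation downwards gives $\psi(i)=0$ whenever $t\nmid i$, while iterating it along the multiples $t,2t,3t,\dots\le n$ (the first step $\psi(t)=\psi(2t)$ using exactly $2t\le n$) gives $\psi(mt)=\psi(t)$ for every $m$ with $mt\le n$. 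This is the formula displayed in~(a).

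For part~(b) the hypothesis is $\psi(t)=0$ for all $t\le\lfloor n/2\rfloor$. If $\psi\equiv 0$ there is nothing to prove (take any $r$ with $\lfloor n/2\rfloor<r\le n$). Otherwise let $r$ be the smallest index with $\psi(r)\neq 0$, so $r>\lfloor n/2\rfloor$. Setting $j=r$ gives $\psi(i)=\psi(i+r)$ for $1\le i\le n-r$; but $r\ge\lfloor n/2\rfloor+1$ forces $n-r\le\lfloor n/2\rfloor$, so each such $i$ already has $\psi(i)=0$ and hence $\psi(r+1)=\dots=\psi(n)=0$. Together with minimality of $r$ and the case hypothesis, $\psi(i)=0$ for all $i\neq r$, which is the formula in~(b).

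Everything here is elementary, so I do not expect a genuine obstacle; the only points that require attention — and hence the main (minor) difficulty — are the bookkeeping ones: $t$ (resp.\ $r$) must be chosen as the \emph{minimal} index with $\psi$ nonzero, for otherwise the stated formulas fail; the index ranges must be tracked carefully, in particular the inequality $n-r\le\lfloor n/2\rfloor$ in~(b), which is exactly what forbids $\psi$ from being nonzero at two distinct indices above $\lfloor n/2\rfloor$; and the degenerate case $\psi\equiv 0$ in~(b) should be dispatched separately.
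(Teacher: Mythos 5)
Your proposal is correct and follows essentially the same route as the paper: both reduce the average identity on basis vectors to the relation $\psi(i)\psi(j)=\psi(j)\psi(i+j)$ and then analyze it via the minimal index at which $\psi$ is nonzero. Your version is slightly more careful than the paper's (explicit minimality of $t$ and $r$, the degenerate case $\psi\equiv 0$, and the remark on sufficiency), but the substance is identical.
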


\begin{proof}

 Consider  \[P(e_i)P(e_j)=\psi(i)\psi(j)e_{i+j}. \quad 2\leq i+j\leq n.\]
On the other hand
\[P(e_iP(e_j))=P(\psi(j)e_{i+j})=\psi(j)\psi(i+j)e_{i+j}.\]

The comparison of both linear combinations implies that:
\begin{equation}\label{i}
  \psi(i)\psi(j)=\psi(j)\psi(i+j), \quad 2\leq i+j\leq n.
\end{equation}

\begin{enumerate}
  \item[(a)] Let $\psi(1)\neq0$. Then we can obtain that
$\psi(i)=\psi(1), \quad 2\leq i\leq n$.
 Let us suppose that $\psi(1)=0$ and
$\psi(t)\neq0$ for some $2\leq t\leq \lfloor n/2\rfloor$. From the system of
Eq.~\eqref{i} it is easy to check that the following equalities hold:
\[\psi(it)=\psi(t), \qquad 2\leq i\leq [n/t].\]

If $\{s/t\}\neq0$ we always can write $s=it+q$, where
$t+1\leq s\leq n,\ 1\leq q\leq t-1$. From the system of Eq.~\eqref{i} we
have
\[\psi(s)=\psi(it+q)=\frac{\psi(q)\psi(it)}{\psi(it)}=\psi(q)=0.\]

So we have \[\psi(s)=0, \quad k+1\leq s\leq n, \quad  \{s/t\}\neq0.\]

  \item[(b)] Let $\psi(r)=0, \ 1\leq r\leq t-1$ and
$\psi(t)\neq0, \lfloor n/2\rfloor<t\leq n$. As in the  previous case we can
always write $s=t+q$ where $1\leq q\leq n-t$ then
\[\psi(s)=\psi(t+q)=\frac{\psi(t)\psi(q)}{\psi(t)}=\psi(q)=0.\] Hence
we have $\psi(s)=0$ where $t+1\leq s\leq n$.

\end{enumerate}

 \end{proof}

\begin{proposition}Let $P_k$ be a homogeneous Average operator with degree $k \ (k\geq\lfloor n/2\rfloor)$ on the null-filiform associative algebra.
Then
\[P_k(e_i)=
\begin{cases}
\psi(i)e_{i+k}, & 1\leq i \leq n-k, \\
0, & n-k+1\leq i \leq n,
\end{cases}
\]
where $\psi(i)\in\mathbb{C}$.

\end{proposition}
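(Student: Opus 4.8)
The plan is to imitate the proofs of the analogous Propositions/Consequences for the Rota-Baxter and Reynolds operators, using two facts: a homogeneous operator of degree $k$ is already forced into the shape \eqref{a}, and the hypothesis $k\geq\lfloor n/2\rfloor$ makes essentially all of the interior multiplication of $\mathcal A$ vanish.

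First I would record that $P_k$ has the form \eqref{a}, so the only thing to establish is that its second branch is trivial, i.e. $\psi(m)=0$ for $n-k+1\leq m\leq n$. For this I would evaluate the Average identity $P(x)P(y)=P(xP(y))$ at the pair $x=e_{n-k}$, $y=e_{n-k+j}$ with $1\leq j\leq k$. Since $n-k\leq n-k$, the first branch gives $P(e_{n-k})=\psi(n-k)e_{n}$, and since $n-k+1\leq n-k+j\leq n$, the second branch gives $P(e_{n-k+j})=\psi(n-k+j)e_{j}$. On one hand $P(e_{n-k})P(e_{n-k+j})=\psi(n-k)\psi(n-k+j)\,e_{n}e_{j}=0$ because $n+j>n$. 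On the other hand $e_{n-k}e_{j}=e_{n-k+j}$ (as $2\leq n-k+j\leq n$), so
\[
P\big(e_{n-k}P(e_{n-k+j})\big)=\psi(n-k+j)P(e_{n-k}e_{j})=\psi(n-k+j)P(e_{n-k+j})=\psi^{2}(n-k+j)\,e_{j}.
\]
Comparing coefficients yields $\psi(n-k+j)=0$ for every $1\leq j\leq k$, that is $\psi(m)=0$ for $n-k+1\leq m\leq n$, which is exactly the asserted form.

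Then I would verify the converse, so that $\psi(1),\dots,\psi(n-k)$ are genuinely free. When $k\geq\lfloor n/2\rfloor$ one has $i+j+2k\geq 2+2\lfloor n/2\rfloor\geq n+1$, hence $e_{i+k}e_{j+k}=0$ and $P(e_i)P(e_j)=0$ whenever $i,j$ lie in the first branch; if either index lies in the second branch the product is again $0$ since $P$ kills those basis vectors. For the right-hand side $P(e_iP(e_j))$, either $i+j+k>n$, so $e_ie_{j+k}=0$, or $i+j+k\leq n$; in the latter case $i+j+k\geq 2+k\geq n-k+1$ because $2k\geq n-1$, so $e_{i+j+k}$ sits in the second branch and is annihilated by $P$. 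In all cases both sides vanish, so every operator of the stated shape is an Average operator.

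The argument is routine; the only delicate point is the book-keeping in the last paragraph, namely checking that whenever the index $i+j+k$ does not already exceed $n$ it must fall into the wrap-around range $[\,n-k+1,n\,]$ on which $\psi$ has just been shown to vanish. This is precisely where the bound $k\geq\lfloor n/2\rfloor$ (equivalently $2k\geq n-1$) is used.
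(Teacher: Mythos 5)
Your proposal is correct and follows essentially the same route as the paper: the paper's argument (carried out explicitly for the analogous Rota--Baxter and Reynolds propositions and only referenced here) is precisely the evaluation of the operator identity at $e_{n-k}$, $e_{n-k+j}$ to force $\psi(n-k+1)=\dots=\psi(n)=0$, with the bound $k\geq\lfloor n/2\rfloor$ emptying the remaining constraints. Your additional verification that both sides of the identity then vanish identically (so that $\psi(1),\dots,\psi(n-k)$ are genuinely arbitrary) is a worthwhile completeness check that the paper leaves implicit.
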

\begin{proof}The proof is carrying out by applying similar arguments as in the proof of the previous theorem.
\end{proof}

Finally we consider homogeneous Average operator with degree $1$.

\begin{theorem}  Let $P_1$ be a Average operator  with degree $1$ on the null-filiform associative algebra. Then
\begin{enumerate}
\item[(a)] If $\psi(1)\psi(2)\neq0$  then $P_1(e_i)=\psi(1)e_{i+1}$;
\item[(b)] If $\psi(1)\neq0$ and $\psi(2)=0$ then
\[P_1(e_i)=
\begin{cases}
\psi(1)e_{i+1}, & i  \ \text{odd}, \\
0, & \text{otherwise};
\end{cases}
\]
\item[(c)] If $\psi(1)=0$ and $\psi(t)\neq0$ for some $2\leq t\leq \lfloor (n-2)/2\rfloor$ then
\[P_1(e_i)=
\begin{cases}
\psi(t)e_{i+1}, & i+1=0 \ (\bmod \ t+1), \\
0, & \text{otherwise};
\end{cases}
\]
\item[(d)] If $\psi(t)=0$ for all $1\leq t\leq \lfloor (n-2)/2\rfloor$    then
\[P_1(e_i)=
\begin{cases}
\psi(r)e_{r+1}, & i=r, \\
\psi(r+1)e_{r+2}, & i=r+1, \\
0, & \text{otherwise},
\end{cases}
\]
for some fixed $\lfloor (n-2)/2\rfloor < r \leq n-1$.
\end{enumerate}

\end{theorem}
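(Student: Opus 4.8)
The plan is to imitate the treatment of the homogeneous Average operator of degree $0$ and, more closely, of the Rota-Baxter operator of weight $0$ and degree $1$ in Theorem~\ref{d1}. First I would write $P_1$ in the general form \eqref{a}, that is $P_1(e_i)=\psi(i)e_{i+1}$ for $1\le i\le n-1$ and $P_1(e_n)=\psi(n)e_1$, and substitute each pair of basis vectors into the Average operator identity $P(e_i)P(e_j)=P\bigl(e_iP(e_j)\bigr)$, using the convention $e_m=0$ for $m>n$. Expanding both sides and comparing coefficients gives, in the interior range, the single functional equation
\[
\psi(i)\psi(j)=\psi(j)\,\psi(i+j+1),\qquad 2\le i+j\le n-2,
\]
whereas the pairs that overflow the basis contribute $\psi(i)\psi(n)=0$ for $1\le i\le n-2$ (from $i+j=n-1$) and, taking $i=j=n$, $\psi(n)^2=0$; hence $\psi(n)=0$, and all the formulae below are understood for $1\le i\le n-1$ with $P_1(e_n)=0$. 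Interchanging $i$ and $j$ (the algebra is commutative) and reusing the equation yields, for every index $t$ in the support of $\psi$, the descent relation $\psi(i+t+1)=\psi(i)$ whenever $i+t\le n-2$; connecting two elements of the support through this relation shows in particular that all nonzero values of $\psi$ coincide.

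Next I would run the case analysis exactly as in Theorem~\ref{d1}, organised by the smallest argument at which $\psi$ is nonzero. If $\psi(1)\ne0$, the descent relation with $t=1$ gives $\psi(i+2)=\psi(i)$, so the odd-indexed values are all $\psi(1)$ and the even-indexed values are all $\psi(2)$; if moreover $\psi(2)\ne0$ then the interior equation at $(i,j)=(1,2)$ forces $\psi(4)=\psi(1)$, hence $\psi(2)=\psi(1)$ and $\psi\equiv\psi(1)$ on $\{1,\dots,n-1\}$ (case (a)), while $\psi(2)=0$ annihilates all even-indexed values (case (b)). If $\psi(1)=0$ and $t$ is the least index with $\psi(t)\ne0$ — necessarily $2\le t\le\lfloor(n-2)/2\rfloor$ — an induction with the descent relation gives $\psi\bigl(s(t+1)-1\bigr)=\psi(t)$ for all $s\ge1$ with $s(t+1)-1\le n-1$; for any other index $m>t$, iterating $\psi(m)=\psi\bigl(m-(t+1)\bigr)$ down to the representative $r\in\{1,\dots,t+1\}$ of $m$ modulo $t+1$ shows $\psi(m)=0$ when $r<t$ (minimality of $t$), and the residue $r=t+1$ (i.e.\ $m\equiv0\pmod{t+1}$) is disposed of by the interior equation at $(i,j)=(t-1,t+1)$ together with the already-established $\psi(2t+1)=\psi(t)\ne0$; this is case (c). Finally, if $\psi(t)=0$ for all $1\le t\le\lfloor(n-2)/2\rfloor$, a short counting argument on the range $2\le i+j\le n-2$ shows that no admissible pair has both arguments in the support of $\psi$, and that a single nonzero value $\psi(r)$ with $r>\lfloor(n-2)/2\rfloor$ forces $\psi(m)=0$ for $r+2\le m\le n-1$; so at most two consecutive values $\psi(r),\psi(r+1)$ survive, no relation connects them, and they are unconstrained (case (d)). In every case one closes by checking directly — the same computation as for degree $0$ — that the exhibited $P_1$ indeed satisfies $P_1(e_i)P_1(e_j)=P_1\bigl(e_iP_1(e_j)\bigr)$.

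The main obstacle, as already in Theorem~\ref{d1}, is the boundary bookkeeping: since the algebra is finite-dimensional, both $e_{i+1}e_{j+1}$ and the intermediate product $e_ie_{j+1}$ leave the basis as soon as the indices grow, so each instance of the Average operator identity is available only for a restricted range of $(i,j)$, and the special pairs with $i+j=n-1$ and with $i$ or $j$ equal to $n-1$ or $n$ must be handled separately to extract $\psi(n)=0$ and the correct upper bound on $r$ in case (d). The only point needing slightly more care than in the degree-$0$ situation is the residue-class descent in case (c): one must verify that the iteration terminates inside $\{1,\dots,t+1\}$ and, through the auxiliary relation noted above, that the residue $0$ modulo $t+1$ really forces $\psi$ to vanish, so that no spurious free parameter remains off the progression $t,2t+1,3t+2,\dots$.
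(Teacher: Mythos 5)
Your proposal is correct and follows essentially the route the paper intends: the paper's own ``proof'' is a one-line reference to the arguments of Theorem~\ref{d1}, and what you write out --- deriving $\psi(i)\psi(j)=\psi(j)\psi(i+j+1)$ on $2\le i+j\le n-2$, extracting $\psi(n)=0$ from the boundary pairs, and running the same case split on the least support index of $\psi$ --- is exactly that argument made explicit. The only nit is the parenthetical claim that ``all nonzero values of $\psi$ coincide,'' which as a blanket statement is contradicted by your own case (d) (where $\psi(r)\ne\psi(r+1)$ is allowed because $r+(r+1)>n-2$); since you only ever invoke it for pairs actually connected by an admissible equation, this is a wording issue rather than a gap.
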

\begin{proof}The proof is carrying out by applying similar arguments as in the proof of  Theorem~\ref{d1}.
\end{proof}


\newpage

\end{document}